\numberwithin{equation}{section} \theoremstyle{plain}
\newtheorem{theorem}{Theorem}
\newtheorem{proposition}[theorem]{Proposition}
\newtheorem{lemma}[theorem]{Lemma}
\newtheorem{corollary}[theorem]{Corollary}
\theoremstyle{definition}
\renewcommand{\leq}{\leqslant}
\renewcommand{\geq}{\geqslant}
\newcommand\E{\mathbb{E}}
\newcommand\Z{\mathbb{Z}}
\newcommand\R{\mathbb{R}}
\newcommand\C{\mathbb{C}}
\newcommand\N{\mathbb{N}}
\newcommand\F{\mathbb{F}}
\newcommand\Q{\mathbb{Q}}
\newcommand\Supp{\operatorname{Supp}}
\renewcommand\P{\mathbb{P}}
\renewcommand\Re{\operatorname{Re}}
\renewcommand{\a}{\alpha}
\renewcommand{\b}{\beta}
\renewcommand{\d}{\delta}
\newcommand{\e}{\varepsilon}
\newcommand{\wt}{\widetilde}
\newcommand\eps{\varepsilon}
\begin{document}

\title[Cut-off phenomenon for ax+b]{Cut-off phenomenon for the ax+b Markov chain over a finite field}

\author{Emmanuel Breuillard}
\address{Centre for Mathematical Sciences\\
Wilberforce Road\\
Cambridge CB3 0WB\\
UK }
\email{emmanuel.breuillard@maths.cam.ac.uk}

\author{P\'eter P. Varj\'u}
\address{Centre for Mathematical Sciences\\
Wilberforce Road\\
Cambridge CB3 0WB\\
UK }
\email{pv270@dpmms.cam.ac.uk}

\thanks{EB has received funding from the European Research Council (ERC) under the European Union's Horizon 2020 research and innovation programme (grant agreement No. 617129);
PV has received funding from the Royal Society and the European Research Council (ERC) under the European Union's Horizon 2020 research and innovation programme (grant agreement No. 803711).}

\keywords{}

\subjclass[2010]{}

\begin{abstract}We study the Markov chain $x_{n+1}=ax_n+b_n$ on a finite field $\F_p$, where $a \in \F_p^{\times}$ is fixed and $b_n$ are independent and identically distributed random variables in $\F_p$. Conditionally on the Riemann hypothesis for all Dedekind zeta functions, we show that the chain exhibits a cut-off phenomenon for most primes $p$ and most values of $a \in \F_p^\times$. We also obtain weaker, but unconditional, upper bounds for the mixing time.
\end{abstract}

\maketitle

%%%%%%%%%%%%%%%%%%%%%%%%%%%%%%%%%%%%%%%%%%%%%%%%%%%%%%%%%%%%%%%%%%%%%
\section{Introduction}
%%%%%%%%%%%%%%%%%%%%%%%%%%%%%%%%%%%%%%%%%%%%%%%%%%%%%%%%%%%%%%%%%%%%%

Let $p$ be a prime number and $\F_p$ the field with $p$ elements. Consider the Markov chain on $\F_p$
\begin{equation}\label{chain}x_{n+1}=ax_n+b_n,\end{equation}
where the multiplier $a$ is non-zero and the $b_n$'s are independent random variables taking values in $\F_p$ with a common law $\mu$. We will assume that the support of $\mu$ has at least two elements, a condition which is easily seen to be equivalent to the aperiodicity of the Markov chain. 
The chain will then mix and eventually equidistribute towards a stationary measure, which must clearly be the uniform distribution $u$ on $\F_p$. For $\delta \in (0,1)$, it is customary to define the mixing time $T(\delta) \in \N$ as
$$T(\delta) := \inf\{ n \in \N | \|\mu_a^{(n)} - u\|_{TV} \leq \delta\}.$$
Here $\mu_a^{(n)}$ denotes the law of $x_n$ on $\F_p$ (starting from $x_0=0$ say) and the norm is the total variation distance:
$$\|\pi_1 - \pi_2\|_{TV} = \sup_{A \subset \F_p} |\pi_1(A)- \pi_2(A)|=\frac{1}{2}\|\pi_1-\pi_2\|_1 \in [0,1]$$
for any two probability measures $\pi_1,\pi_2$ on $\F_p$.

In this paper, we will be interested in giving good bounds on $T(\delta)$ in terms of $p$.  When $a=1$ or when $a$ has small multiplicative order in $\F_p^{\times}$, then the Markov chain exhibits a diffusive behaviour and $T(\delta)$ will typically be of order $p^2$. However, if $a$ is more typical in $\F_p^{\times}$, we expect the mixing time to be of order $\log p$, which is of course optimal up to a multiplicative constant when the size of the support $\Supp(\mu)$ of $\mu$ is bounded independently of $p$, because only at most $|\Supp(\mu)|^n$ sites can be visited by the Markov chain after $n$ steps (see $(\ref{ent-b})$ below).

This problem, inspired by the pseudo-random generators used in the first electronic computers and based on the idea, due to D.H. Lehmer \cite{lehmer-comp}, of iterating the map $x \mapsto ax+1$ on $\F_p$, has been studied in a beautiful paper of Chung-Diaconis-Graham \cite{CDG-RW} in the special case when $a=2 \mod p$ for all $p$ and $b_n$ is uniform in $\{-1,0,1\}$. Their work was extended by Hildebrand \cites{Hil-thesis, Hildebrand-AnnalsOfProba} to the case when $a$ is allowed to be random, i.e. $a = \lambda_n \mod p$ and $b_n=\beta_n \mod p$, where the $\lambda_n$'s and $\beta_n$'s are two independent sequences of i.i.d. integer valued random variables, each chosen according to some fixed probability measure supported on $\Z$. In this case, under mild assumptions of the measures,  mixing occurs in $O(\log p \log \log p)$ steps, and this bound in sharp in some cases (e.g. $a=2$, $\mu$ uniform on $\{-1,0,1\}$ and $p$ is a  Mersenne prime, i.e.  of the form $p=2^n-1$). See  \cite{CDG-RW}*{Theorem 1} as well as \cite{hildebrand-on-the-CDG-process}.  Still, the sharpness seems to be occuring only for a thin, density zero set of primes (recall that Mersenne primes have density zero and  form a conjecturally infinite set of primes).  This  hints at the possibility that an upper bound of the form $O(\log p)$ might hold for almost all primes and indeed Chung-Diaconis-Graham prove in \cite{CDG-RW} the analogous statement in $\Z/p\Z$, where $p$ is an arbitrary integer not required to be prime.

We do not answer this specific question, but rather study what happens when $a$ is an arbitrary element of $\F_p$ (as opposed to being the mod $p$ quotient of a fixed value $a\in \Z$ independent of $p$ as in the above mentioned works). It was observed by Bukh, Harper, Helfgott and Lindenstrauss (see \cite{Hel-Arizona}*{Exercises 3.13 and 3.14}), that a general polylogarithmic upper bound on the mixing time, valid for all primes $p$ and for all $a$ of sufficiently large multiplicative order, can be deduced easily from some estimates of Konyagin \cite{Kon-RW}. This gives 
\begin{equation}\label{kon}T(\delta) = O((\log p)^{2+o(1)})\end{equation} provided the multiplicative order of $a$ is at least $O(\log p\log\log p)$ in $\F_p$. See Section \ref{konyagin} below, where a more precise statement is given. Improving on this bound would touch upon some delicate number theoretical issues: for example proving a $O(\log p)$ upper bound for most primes $p$ and for all values of $a$ of sufficiently large multiplicative order would already imply the celebrated Lehmer conjecture (see Section \ref{lehmer-sec} and \cite{breuillard-varju-Lehmer}).

In this paper, we prove three results valid for most values of the multiplier $a$. First we prove that the $O(\log p\log \log p)$ upper bound holds for all primes (Theorem \ref{loglog}). Then we will show a stronger logarithmic $O(\log p)$ upper bound, which is only valid for most (in a certain weak sense) primes $p$ (Theorem \ref{uncond}). Finally, conditionally on the Riemann hypothesis for Dedekind zeta functions of number fields, we will establish that  the Markov chain exhibits a cut-off phenomenon for most (this time density $1$) primes $p$ (Theorem \ref{TVcutoff}). A sequence of Markov chains is said to exhibit the cut-off phenomenon if there is a sharp phase transition to equidistribution, that is we have $T(\d_1)/T(\d_2)\to 1$ as we go through the sequence of chains for any $\d_1,\d_2\in(0,1)$.  We refer the reader to   \cites{diaconis-survey, peres-levin} and references therein for an introduction to cut-off phenomena for Markov chains.

\begin{theorem}\label{loglog}  Let $p$ be a prime number and $\mu$ a probability measure on $\F_p$ supported on at least two elements. Let $\eps \in (0,1)$. Then for all but an $\eps$-fraction of all $a \in \F_p^{\times}$, the mixing times of the Markov chain $(\ref{chain})$ satisfy for any $\delta \in (0,\frac{1}{2})$:
$$T(\delta) \leq T_2(\delta) \leq C_{\eps,\mu}\log p (\log\log p +\log (C_{\eps,\mu}  \delta^{-1})),$$
where $C_{\eps,\mu}=O(\eps^{-1}H_2(\mu)^{-1})$ for some absolute implied constant.
\end{theorem}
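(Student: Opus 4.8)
The plan is to pass from total variation to the $\ell^2$ distance and bound $T_2(\delta) := \inf\{n : \|\mu_a^{(n)} - u\|_2 \le 2\delta/\sqrt{p}\}$ (the normalization making $\|\mu_a^{(n)} - u\|_{TV} \le \frac12\sqrt{p}\,\|\mu_a^{(n)} - u\|_2$). Since the $b_n$ are i.i.d., after $n$ steps one has $x_n = \sum_{k=0}^{n-1} a^k b_k$, so $\mu_a^{(n)}$ is the convolution of the pushforwards of $\mu$ under multiplication by $1, a, \dots, a^{n-1}$. Taking Fourier transforms on $\F_p$, for a nonzero character $\xi \mapsto e_p(\xi t)$ one gets
\[
\widehat{\mu_a^{(n)}}(t) = \prod_{k=0}^{n-1} \widehat{\mu}(a^k t),
\]
and by Parseval $\|\mu_a^{(n)} - u\|_2^2 = \frac1p \sum_{t \ne 0} \prod_{k=0}^{n-1} |\widehat{\mu}(a^k t)|^2$. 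So everything reduces to controlling, for each $t \ne 0$, the number of $k \in \{0,\dots,n-1\}$ for which $a^k t$ lands in the region where $|\widehat{\mu}|$ is close to $1$.

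**The key counting step.**

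The central point is that $|\widehat{\mu}(s)|$ is bounded away from $1$ unless $s$ is close to $0$ in the cyclic sense, quantitatively: since $\mu$ is supported on at least two elements $c_1 \ne c_2$ with masses $\ge$ some amount controlled by $H_2(\mu)$, one has $1 - |\widehat{\mu}(s)|^2 \gtrsim H_2(\mu) \cdot \|s(c_1-c_2)/p\|^2$ where $\|\cdot\|$ is distance to the nearest integer (or more robustly, $1 - |\widehat\mu(s)|^2 \gtrsim H_2(\mu) \cdot \min_{c,c' \in \Supp\mu} \mu(c)\mu(c') \|s(c-c')/p\|^2$). Hence $|\widehat{\mu}(a^k t)|$ is close to $1$ only when $a^k t$ lies in a short arc around $0$, of length roughly $p/M$ for a threshold parameter $M$. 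For a fixed $t$, I want to say that the orbit $\{t, at, a^2t, \dots\}$ does not spend too much time in this short arc. This is exactly where the averaging over $a$ enters: for most $a$, multiplication by $a$ mixes the orbit quickly. Concretely, one estimates the expectation over $a \in \F_p^\times$ of $\#\{0 \le k < n : a^k t \in (\text{arc of length } p/M)\}$; each term contributes roughly $1/M$ on average (the distribution of $a^k t$ over random $a$ is close to uniform, at least when one is slightly careful about the multiplicative order of $a$ and the subgroup generated), so the expected count is $O(n/M)$, and by Markov's inequality the count is $O(n/(\eps M))$ for all but an $\eps$-fraction of $a$ (summing the bad events over $t$ and over dyadic scales of the arc, which costs only a $\log p$ factor absorbed into constants).

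**Assembling the bound.**

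Granting that for a good $a$ (outside the $\eps$-fraction) we have, for every $t \ne 0$, at least $n - O(n/(\eps M) \cdot \log p)$ — wait, more carefully: at least a constant fraction of the $k < n$ satisfy $|\widehat\mu(a^k t)| \le 1 - c H_2(\mu)/M^{?}$ — one gets $\prod_{k<n} |\widehat\mu(a^k t)|^2 \le \exp(-c' n H_2(\mu) / (\text{stuff}))$ once $n$ exceeds a suitable multiple of $\log p$. Then $\|\mu_a^{(n)} - u\|_2^2 \le \frac1p \cdot p \cdot \exp(-c' n H_2(\mu)/C)$, and requiring this to be $\le 4\delta^2/p$ gives
\[
n \ge \frac{C}{H_2(\mu)}\big(\log p + \log(1/\delta)\big),
\]
and the extra $\log\log p$ in the statement arises from the loss in the counting step: controlling the sojourn of the orbit in the small arc for \emph{all} $t$ simultaneously, rather than a fixed $t$, forces $M$ (equivalently the quality of the per-step decay) to be taken of size about $\log p$, so that the union bound over $t$ and the requirement that a positive fraction of steps be "good" costs a factor $\log\log p$; this is the standard phenomenon already present in Chung–Diaconis–Graham and Hildebrand.

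**The main obstacle.**

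The hard part is the counting step: showing that for most $a$, the multiplicative orbit of $t$ under $a$ is nearly equidistributed — or at least does not concentrate in a short arc near $0$ — uniformly over all $t \ne 0$. One must handle the possibility that $a$ has small multiplicative order (a small but nonzero fraction of $a \in \F_p^\times$), and more subtly that even when the order is large, the orbit $\{a^k t\}$ could a priori correlate with the arc; the clean way is to bound $\E_a \#\{k < n : a^k t \in I\}$ by switching the order of summation and, for fixed $k$, bounding $\E_a \one[a^k t \in I] = \frac{1}{p-1}\#\{a : a^k \in I/t\}$, which via the standard bound on the number of solutions to $y^k = c$ (at most $\gcd(k,p-1)$ solutions per value) and a count of $k$-th powers in a short interval gives the needed $O(|I|/p) = O(1/M)$ on average over $k$, after discarding the few $k$ with $\gcd(k,p-1)$ large — equivalently discarding a few bad $a$. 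Making this uniform in $t$ and choosing $M \asymp \log p$ to balance the union bound against the exponential decay is the technical crux, and it is what produces the $\log\log p$ factor in Theorem \ref{loglog}.
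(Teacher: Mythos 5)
Your proof takes a genuinely different route from the paper, and as written it has gaps that I don't believe can be closed with the tools you invoke.

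The paper proves Theorem \ref{loglog} by combining two ideas that do not appear in your sketch. First, the root-counting identity
$$\sum_{a\in\F_p}\|\mu_a^{(n)}\|_2^2 = \E\bigl(N_p(P_1-P_2)\bigr)\le p\|\mu\|_2^{2n}+n,$$
where $P_1,P_2$ are independent random polynomials of degree $<n$ with $\mu$-distributed coefficients and $N_p$ is the number of roots in $\F_p$; this gives, via Markov's inequality, that $p\|\mu_a^{(n)}\|_2^2\le 2\eps^{-1}(n+1)$ for all but an $\eps/2$-fraction of $a$ once $n\ge\log p/H_2(\mu)$. Second, the self-similarity $\mu_a^{(n+m)}=\mu_a^{(n)}*a^n.\mu_a^{(m)}$ gives the multiplicity estimates $(\ref{secline})$--$(\ref{self-improve})$ on the Fourier side, leading to the self-improving bound $(\ref{goodbound})$, $p\|\mu_a^{(n+k(m+n))}-u\|_2^2\le (p\|\mu_a^{(n)}\|_2^2)^{k+1}/(m+1)^k$; iterating $k\approx\log(n/\delta^2\eps)\approx\log\log p$ times is exactly what produces the $\log p\log\log p$ bound, and it is this iteration, not any property of orbits in short arcs, that accounts for the $\log\log p$.

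Your Fourier-and-orbit-counting approach runs into three obstructions. (a) You bound $\E_a\#\{k<n:a^kt\in I\}$ for each fixed $t$ and then invoke Markov, but the argument needs the conclusion simultaneously for all $p-1$ nonzero $t$; a union bound over $t$ costs a factor $p$ in the failure probability, and your dyadic remark controls arc lengths, not the number of frequencies. The paper avoids this issue entirely by first controlling $\sum_{t\neq0}\prod_k|\widehat\mu(a^kt)|^2 = p\|\mu_a^{(n)}\|_2^2-1$ in one stroke via root counting, and then upgrading the sum-over-$t$ control to a sup-over-$t$ control through $(\ref{self-improve})$; a direct pointwise bound for each $t$ is precisely what is out of reach. (b) The expectation $\E_a\bigl[\mathbb{1}(a^kt\in I)\bigr]$ is not $O(|I|/p)$ in general: each value has up to $\gcd(k,p-1)$ $k$-th roots, so the bound is $|I|\gcd(k,p-1)/(p-1)$, and $\sum_{k<n}\gcd(k,p-1)$ can be as large as $n$ times the number of divisors of $p-1$, which is $p^{o(1)}$ but not polylogarithmic for all primes; your plan to discard the $k$ with large $\gcd(k,p-1)$ is not shown to leave a positive fraction of good $k$ for every $p$. (c) The quantitative bookkeeping does not produce $\log p\log\log p$. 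Excluding an arc of length $p/M$ gives a per-step decay of $1-cH_2(\mu)/M^2$ (quadratic, since $1-|\widehat\mu(s)|^2\asymp\|\cdot\|^2$ near $0$), so even a positive fraction of good steps only yields $\prod_k|\widehat\mu(a^kt)|^2\le\exp(-cnH_2(\mu)/M^2)$, and making this $\le\delta^2/p$ forces $n\gtrsim M^2(\log p+\log\delta^{-1})/H_2(\mu)$. With $M\asymp\log p$ as you propose this is $n\gtrsim(\log p)^3$; there is no choice of $M$ that recovers $\log p\log\log p$ within this framework, which is exactly why Theorem \ref{loglog} is not proved by the Konyagin-type Fourier-orbit argument of Section \ref{konyagin} but by the multiplicity trick of Section \ref{multi}.
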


Here $T_2(\delta):=\inf\{ n \in \N | p\|\mu_a^{(n)} - u\|^2_{2} \leq \delta^2\}$ is the $\ell^2$-mixing time, and $H_2(\mu):= - \log  \|\mu\|_2^2$  is the R\'enyi entropy of $\mu$ of order $2$, where $\|\mu\|_2^2= \sum_{x \in \F_p} \mu(x)^2$. 

 We recall that for all $\delta>0$ we have $T(\delta)\leq T_2(2\delta) \leq T_2(\delta)$, which  is a direct consequence of the Cauchy-Schwarz inequality:
$$\|\mu_a^{(n)} - u\|^2_{1}  \leq  p\|\mu_a^{(n)} - u\|^2_{2}.$$

In the next results we will improve the above bound for a typical prime. For this reason we need to restrict somewhat the class of chains of the form $(\ref{chain})$ that we consider by assuming that the $b_i$'s are the mod $p$ values of a fixed sequence of integer valued i.i.d. random variables whose law $\mu$  is supported on $\Z$ and independent of $p$. 

\begin{theorem}\label{uncond}Fix a probability measure $\mu$ on $\Z$ supported on a finite set of at least two elements, and consider the Markov chain $(\ref{chain})$. There is a possibly empty exceptional set of primes $B$ such that for every $X\ge1$,
\begin{equation}\label{thinn}\sum_{p \in B \cap [e^{X/2},e^X]} \frac{1}{p} \leq \frac{(\log X)^6}{\sqrt{X}}\end{equation}
with the property that given $\eps,\delta \in (0,1)$, if $p$ is a prime larger than a constant depending only on $\mu$ and these parameters, and $p \notin B$, then for all but an $\eps$-fraction of all $a \in \F_p^\times$ we have:
$$T(\delta) \leq T_2(\delta) \leq \frac{5}{H_2(\mu)} \log p,$$ where  $H_2(\mu):= - \log  \sum_{z \in \Z} \mu(z)^2$. 
\end{theorem}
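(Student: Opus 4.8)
The plan is to work on the Fourier side of $\F_p$. Write $e_p(t)=e^{2\pi i t/p}$ and $\widehat\mu(\xi)=\sum_{z}\mu(z)e_p(\xi z)$ for $\xi\in\F_p$; since the law of $x_n$ started at $0$ has Fourier transform $\prod_{j=0}^{n-1}\widehat\mu(a^j\xi)$, Plancherel gives
\begin{equation*}
p\,\|\mu_a^{(n)}-u\|_2^2=\sum_{\xi\in\F_p^{\times}}\ \prod_{j=0}^{n-1}\bigl|\widehat\mu(a^j\xi)\bigr|^2 .
\end{equation*}
It therefore suffices to prove that, for $n=\lfloor \tfrac{5}{H_2(\mu)}\log p\rfloor$, this sum is $\leq\delta^2$ for all $a$ outside a set of cardinality $\leq\eps p$, provided $p\notin B$ is larger than a constant depending only on $\mu$, $\eps$ and $\delta$.

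I would use two elementary facts about the single-step weight $|\widehat\mu(\eta)|^2\in[0,1]$. First, Plancherel applied to $\mu$ itself gives $\sum_{\eta\in\F_p}|\widehat\mu(\eta)|^2=p\,e^{-H_2(\mu)}$ once $p$ is large enough that the support of $\mu$ injects into $\F_p$, so for every threshold $\theta\in(0,1)$ the superlevel set $E_\theta:=\{\eta\in\F_p:\ |\widehat\mu(\eta)|^2>\theta\}$ has $|E_\theta|\leq e^{-H_2(\mu)}\theta^{-1}p$. Second, $|\widehat\mu(\eta)|^2$ is the value at $\eta/p$ of a fixed non-negative trigonometric polynomial, so $E_\theta$ is contained in a bounded number of arcs of controlled length around the (finitely many) rationals where $|\widehat\mu|$ is close to $1$; this structural information is what makes the correlation estimates for $E_\theta$ below tractable. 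From the first fact alone we get, for each $\xi\neq0$,
\begin{equation*}
\prod_{j=0}^{n-1}\bigl|\widehat\mu(a^j\xi)\bigr|^2\leq\theta^{\,n-B_\theta(\xi)},\qquad B_\theta(\xi):=\#\{\,0\leq j<n:\ a^j\xi\in E_\theta\,\},
\end{equation*}
and expanding $\theta^{-B_\theta(\xi)}=\prod_{j=0}^{n-1}\bigl(1+(\theta^{-1}-1)\one_{E_\theta}(a^j\xi)\bigr)$ yields
\begin{equation*}
\sum_{\xi\neq0}\ \prod_{j=0}^{n-1}\bigl|\widehat\mu(a^j\xi)\bigr|^2\ \leq\ \theta^{n}\sum_{T\subseteq\{0,\dots,n-1\}}(\theta^{-1}-1)^{|T|}\ \#\{\,\xi\in\F_p^{\times}:\ a^j\xi\in E_\theta\ \text{for all }j\in T\,\}.
\end{equation*}
If for most $a$ the inner count behaves like $|E_\theta|\,(|E_\theta|/p)^{|T|-1}$ --- i.e.\ the dilates $a^{-j}E_\theta$ are ``quasi-independent'' subsets of $\F_p$ --- then summing the resulting geometric series in $|T|$ gives a bound of the form $p\,\bigl(\theta+O_\mu(e^{-H_2(\mu)})\bigr)^{n}$, and optimising $\theta$ against the constraint $|E_\theta|\leq e^{-H_2(\mu)}\theta^{-1}p$ turns this into $p\,e^{-c_\mu n}$ with $c_\mu\geq H_2(\mu)/5$ once $p$ is large, which is $\leq\delta^2$ for $n\geq\tfrac{5}{H_2(\mu)}\log p$. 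This is where the hypothesis that $p$ be large in terms of $\mu,\eps,\delta$ enters.

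The heart of the matter, and the source of both the excluded $a$ and the exceptional primes, is the quasi-independence input. By Plancherel on $\F_p$,
\begin{equation*}
\#\bigl(E_\theta\cap a^{-d}E_\theta\bigr)=\frac{|E_\theta|^2}{p}+\frac1p\sum_{\xi\neq0}\widehat{\one_{E_\theta}}(a^{d}\xi)\,\overline{\widehat{\one_{E_\theta}}(\xi)},
\end{equation*}
and iterating, the counts in the displayed bound are governed by the exponential sums $\sum_{0\leq j<n}e_p(m\,a^{j}\xi)$ along orbits of multiplication by $a$. First I would discard the $a$ of small multiplicative order: since $\#\{a:\operatorname{ord}(a)\leq R\}\leq\sum_{d\mid p-1,\ d\leq R}\varphi(d)\leq R^2$, taking $R=\sqrt{\eps p/2}$ removes at most $\tfrac12\eps p$ of them and, for $p$ large, ensures $n<\operatorname{ord}(a)$. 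For the remaining $a$ I would use the unconditional estimates for such exponential sums in the spirit of Konyagin recalled in Section \ref{konyagin}; these are good enough for all but a further $\tfrac12\eps p$ values of $a$, as long as $p$ avoids the set $B$ of primes for which those sums --- equivalently, the near-coincidences among the small dilates of $\Supp\mu$, which are governed by divisibility of certain resultants attached to $\mu$ --- are abnormally large. A large-sieve / divisor-counting bound on the number of such primes in a dyadic range $[e^{X/2},e^{X}]$ then gives $(\ref{thinn})$.

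The hard part is exactly this last input: establishing that $\#\bigl(E_\theta\cap a^{-d_1}E_\theta\cap\dots\cap a^{-d_t}E_\theta\bigr)$ is within a constant factor of $|E_\theta|(|E_\theta|/p)^{t-1}$ for enough configurations of parameters, while keeping the exceptional set of primes as thin as $(\ref{thinn})$. This needs the best available unconditional bounds for exponential sums along geometric progressions modulo $p$ together with their averages over $p$, and a careful treatment of the degenerate cases --- $a$ close to a root of a polynomial of small height, $p$ dividing one of the relevant resultants, and the arcs composing $E_\theta$ nesting under multiplication by powers of $a$. The rest is soft Fourier analysis and bookkeeping; pushing the constant down to $5/H_2(\mu)$ with a thin exceptional set is where the genuine difficulty lies.
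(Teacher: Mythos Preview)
Your proposal is an outline rather than a proof, and the gap is precisely where you locate the difficulty. The ``quasi-independence'' statement
\[
\#\bigl(E_\theta\cap a^{-d_1}E_\theta\cap\cdots\cap a^{-d_t}E_\theta\bigr)\ \ll\ |E_\theta|\,\bigl(|E_\theta|/p\bigr)^{t-1}
\]
is asserted but not established, and the tools you invoke do not deliver it. Konyagin's lemma from Section~\ref{konyagin} only says that $\sum_j [x_0 a^j/p]^2\gg 1/\log\log p$ over blocks of length $(\log p)(\log\log p)^4$; this gives $T_2(\delta)=O((\log p)^2(\log\log p)^5)$ for \emph{every} $a$ of large order, which is far too weak to force the superlevel sets of $|\widehat\mu|$ to decorrelate under multiplication by powers of $a$ at the rate you need, even for most $a$. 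Nor does averaging the orbit exponential sums over $a$ recover a pointwise (per-$a$) estimate of the required strength. Your mechanism for the exceptional primes --- ``divisibility of resultants'' and a ``large-sieve / divisor-counting bound'' --- is not specified and it is not clear what the resultants would be or why their bad primes would be as sparse as $(\ref{thinn})$.

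The paper's argument is structurally different and never attempts per-$a$ Fourier control. It starts from the identity $\sum_{a}\|\mu_a^{(n)}\|_2^2=\E\bigl(N_p(P)\bigr)$ for $P=P_1-P_2$ a random polynomial of degree $<n$, then factors $P$ into irreducibles. Dobrowolski's Mahler-measure bound forces the number of non-cyclotomic factors to be $O((\log n)^4)$. An \emph{unconditional} averaged prime ideal theorem (Lemma~\ref{av-bound}, proved via the explicit formula for Dedekind zeta functions and a zero-density input) controls $\sum_{p\in[e^{X/2},e^X]}N_p(Q)/p$ for each irreducible factor $Q$. Summing over factors gives Proposition~\ref{uncond-av}: $\sum_{p\in[e^{X/2},e^X]}\E_a(\|\mu_a^{(n)}\|_2^2)=O_\mu((\log X)^5)$ when $n=\lfloor X/H_2(\mu)\rfloor$. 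This defines $B$ as the primes where $\E_a(p\|\mu_a^{(2n)}\|_2^2)$ is abnormally large, and Markov's inequality yields $(\ref{thinn})$. For $p\notin B$, another Markov step over $a$ gives $p\|\mu_a^{(2n)}\|_2^2\leq C_{\mu,\eps}\sqrt{\log p}/\log\log p$ for most $a$, and the self-similarity inequality $(\ref{goodbound})$ with $k=1$, $m\approx n$ squares this and divides by $m$, yielding $p\|\mu_a^{(5n)}-u\|_2^2=o(1)$. The constant $5$ arises from the budget $2n+(n+m)=5n$ in that bootstrap, not from any optimisation of a threshold $\theta$. None of the three ingredients --- the random-polynomial root count, the Dobrowolski bound on the factor count, and the Dedekind-zeta average over primes --- appears in your sketch.
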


Note that condition $(\ref{thinn})$ implies that $\sum_{p \in B} p^{-1}$ converges, while it is well-known that  $\sum_{p} p^{-1}$ diverges and that indeed $\sum_{p \in [e^{X/2},e^X]} p^{-1}$ lies between two positive constants independent of $X>0$. This is the sense in which the complement of $B$ contains most primes.

Finally we state our main result establishing a cut-off under GRH. We define the entropy of $\mu$ to be $$H(\mu)=\sum_{z \in \Z} \mu(z)\log \frac{1}{\mu(z)}.$$ We also set $C_\mu=\max\{1,\log (\sup \mu/ \inf\mu)\},$ where $\sup$ and $\inf$ are taken over the support of $\mu$.

\begin{theorem}\label{TVcutoff} Assume that the Riemann hypothesis holds for the Dedekind zeta function of an arbitrary number field. Fix a probability measure $\mu$ on $\Z$ supported on a finite set of at least two elements, and consider the Markov chain $(\ref{chain})$. Fix $\eps,\kappa,\theta>0$. For $X>0$, let $n$ be the integer part of $(X+\theta \sqrt{X})/H(\mu)$. Then for all but an $\eps$-fraction of all primes $p$ in $[e^X,(1+\kappa)e^X]$ and all but an $\eps$-fraction of all $a \in \F_p^\times$ we have
$$\|\mu_a^{(n)} -u \|_{TV} \leq 5\eps^{-1}e^{-\frac{H(\mu)}{16C_\mu^2}\theta^2},$$ provided $X \ge 2\log \eps^{-1}+C(\theta^2 \log \theta)^2 + C$, where $C$ is a constant depending only on $\kappa$ and $\mu$. 
\end{theorem}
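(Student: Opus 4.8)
The plan is to analyze the Fourier expansion of $\mu_a^{(n)}$ on $\F_p$ and to control the contribution of each nontrivial frequency via the product structure of the chain. Writing $\xi_0,\dots,\xi_{n-1}$ for the i.i.d.\ increments with common law $\mu$ (reduced mod $p$), one has $x_n = a^{n-1}\xi_0 + \cdots + \xi_{n-1}$, so that for a nontrivial additive character $e_p(\cdot)$ the Fourier coefficient at $t \in \F_p^\times$ factorizes as
\begin{equation}\label{fourierfactor}
\widehat{\mu_a^{(n)}}(t) = \prod_{j=0}^{n-1} \widehat{\mu}\!\left(t a^j \bmod p\right),
\end{equation}
where $\widehat{\mu}$ is viewed as a function on $\F_p$. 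By Parseval,
\begin{equation}\label{parseval}
p\,\|\mu_a^{(n)} - u\|_2^2 = \sum_{t \in \F_p^\times} \prod_{j=0}^{n-1} \bigl|\widehat{\mu}(t a^j)\bigr|^2,
\end{equation}
and $\|\mu_a^{(n)}-u\|_{TV}^2 \le \tfrac14 p\|\mu_a^{(n)}-u\|_2^2$ by Cauchy--Schwarz. So it suffices to show that, after averaging over a positive proportion of primes $p \in [e^X,(1+\kappa)e^X]$ and over a $(1-\eps)$-fraction of multipliers $a \in \F_p$, the right-hand side of \eqref{parseval} is at most $O(\eps^{-2} e^{-\frac{H(\mu)}{8 C_\mu^2}\theta^2})$.

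The key point is that each factor $|\widehat{\mu}(t a^j)|^2$ is a trigonometric polynomial in $t a^j \bmod p$ with integer frequencies drawn from $\Supp(\mu)-\Supp(\mu)$, and for ``most'' choices of $a$ the orbit $\{t, ta, ta^2, \dots, ta^{n-1}\}$ behaves like an equidistributing sequence, so that $\frac1n \sum_{j<n} \log|\widehat{\mu}(ta^j)|^2$ is close to $\int_{\T} \log|\widehat{\mu}(x)|^2\,dx$. A Jensen-type computation identifies this Mahler-measure integral: if $P(z) = \sum_z \mu(z) z^{\,z-z_{\min}}$ is the associated integer polynomial, then $\int_\T \log|\widehat\mu|^2 = 2\log M(P)$ where $M$ is the Mahler measure, and the arithmetic input (this is where GRH enters) is needed to show that for density-$1$ primes $p$ the relevant orbits cannot concentrate near the zero set of $\widehat\mu$ for more than a controlled number of steps — this is exactly the kind of estimate that, without a hypothesis, runs into Lehmer's problem as flagged in the introduction. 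I expect the factorization \eqref{fourierfactor}, Parseval \eqref{parseval}, and the Mahler-measure identification to be essentially bookkeeping; the heart of the matter, and the step consuming the arithmetic hypothesis, is the uniform control of the multiplicative orbit $\{ta^j\}_j$ against the (number-field-defined) bad set, together with converting the ``$H(\mu)$ vs.\ $H_2(\mu)$'' discrepancy — note the target exponent features the Shannon entropy $H(\mu)$, not the Rényi entropy $H_2(\mu)$ appearing in Theorems \ref{loglog} and \ref{uncond}, so a sharper, large-deviations style argument is required rather than the crude $\ell^2$ bound.

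Concretely, the steps I would carry out are: (i) reduce the total-variation bound to \eqref{parseval} and split the sum over $t$ according to the multiplicative order of $a$ and the ``type'' of the orbit $\{t a^j\}$; (ii) for a fixed prime $p$ and a fixed $t$, estimate $\prod_j |\widehat\mu(ta^j)|^2$ by comparing $\sum_j \log|\widehat\mu(ta^j)|^2$ with $n$ times the Mahler integral, paying an error that is small for all but few $(a,t)$; (iii) average over $a \in \F_p$, using that the number of $a$ for which the orbit is ``atypical'' is $o(p)$ — here one counts solutions to polynomial congruences and invokes the Riemann hypothesis for the relevant Dedekind zeta functions (cyclotomic-type fields attached to the orbit relations) to get the needed equidistribution with an effective error; (iv) average over primes $p \in [e^X,(1+\kappa)e^X]$ to absorb the remaining exceptional set into the $\eps$-fraction, invoking a prime counting / Chebotarev input; (v) choose $n = \lfloor (X + \theta\sqrt X)/H(\mu)\rfloor$ so that the ``surplus'' $\theta\sqrt X$ of steps beyond the critical time $X/H(\mu)$ forces $\sum_j \log|\widehat\mu(ta^j)|^2 \le -2H(\mu)n + \theta\sqrt X\cdot(\text{something})$ up to Gaussian fluctuations of standard deviation $\sim C_\mu\sqrt n$, producing the $e^{-c\theta^2}$ decay after summing the resulting (sub-)Gaussian tail over the $p$ frequencies $t$; the condition $X \ge 3\log\eps^{-1} + C(\theta^2\log\theta)^2 + C$ is precisely what makes the union bound over $t \in \F_p^\times$ (there are $\approx e^X$ of them) converge against the Gaussian tail $e^{-c\theta^2}$ while leaving room for the exceptional-prime and exceptional-$a$ budgets. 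The main obstacle, as anticipated, is step (iii): making the equidistribution of $\{t a^j \bmod p\}_{j<n}$ effective and uniform enough — with error term beating the number of frequencies — which is exactly where the full strength of GRH for Dedekind zeta functions is unavoidable.
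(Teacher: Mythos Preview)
Your Fourier/equidistribution plan has a genuine gap at the heart of step~(v): the quantity that equidistribution of the orbit $\{ta^j\}$ would produce is the Mahler integral $\int_{\T}\log|\widehat\mu|^2$, \emph{not} $-2H(\mu)$. These coincide for uniform $\mu$ but differ in general (e.g.\ $\mu=(\tfrac12,\tfrac14,\tfrac14)$ on $\{0,1,2\}$ gives $\int_{\T}\log|\widehat\mu|^2=-2\log 2$ while $-2H(\mu)=-3\log 2$). So even a perfect equidistribution argument along your lines would yield a cut-off at the wrong time. Relatedly, your union-bound remark is backwards: a \emph{lower} bound on $X$ makes the number of frequencies $p\approx e^X$ larger, so a fixed Gaussian tail $e^{-c\theta^2}$ cannot beat a sum over $e^X$ terms. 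Finally, it is not at all clear that GRH for Dedekind zeta functions yields the orbit-equidistribution statement you need in~(iii); that is not the standard consequence of the prime ideal theorem.

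The paper takes a completely different route that sidesteps all of this. First, it does not analyse individual Fourier coefficients; instead it uses the identity
\[
\sum_{a\in\F_p}\|\mu_a^{(n)}\|_2^2=\E\bigl(N_p(P)\bigr),
\]
where $P=P_1-P_2$ is a random polynomial and $N_p(P)$ is its number of roots in $\F_p$. The GRH input is then twofold and \emph{arithmetic}, not dynamical: (a) the main theorem of the companion paper \cite{breuillard-varju-irred} (which assumes GRH) guarantees that, with high probability, $P$ is irreducible after removing cyclotomic and monomial factors; (b) the prime ideal theorem with GRH error then gives $\E_p(N_p(\wt P))=1+O(e^{-X/2}X^2n^3)$ when averaging over primes in $[e^X,(1+\kappa)e^X]$. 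The conversion from $H_2(\mu)$ to $H(\mu)$---the very issue you flag---is achieved not through Fourier analysis but by a Chernoff--Hoeffding restriction: one replaces $\mu^{(n)}$ by its restriction to the typical set $A_n=\{\omega:|\tfrac1n\log\mu^{(n)}(\omega)+H(\mu)|\le t\}$, on which $\mu^{(n)}$ is pointwise $\le e^{-n(H(\mu)-t)}$, so that $p\|\mu^{(n)}_{|A_n}\|_2^2\le p\,e^{-n(H(\mu)-t)}$ rather than $p\|\mu\|_2^{2n}$. One then runs the root-counting and prime-averaging argument on $\pi_a(\mu^{(n)}_{|A_n})$, and a double Markov inequality (over $p$, then over $a$) extracts the claimed $(1-\eps)$-fractions. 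The choice $t=\theta H(\mu)/(2\sqrt X)$ balances the Chernoff error $e^{-2t^2n/C_\mu^2}\approx e^{-H(\mu)\theta^2/(4C_\mu^2)}$ against the main term $p\,e^{-n(H(\mu)-t)}\approx e^{-\theta\sqrt X/2}$, which is where the exponent in the statement comes from.
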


On the other hand, there is a universal lower bound on $\|\mu_a^{(n)} -u \|_{TV}$ in terms of the entropy of the walk. Mixing cannot occur before the entropy of the walk $H(\mu_a^{(n)})\leq nH(\mu)$ approaches the maximal entropy, namely $H(u)=\log p$.  In fact, we will show the following in Section \ref{total}. Let $p$ be prime, $a\in \F_p$ and $\theta\in(0,\sqrt{\log p}/2)$. If $n\ge 0$ is the integer part of $(\log p - \theta \sqrt{\log p})/H(\mu)$, we have
\begin{equation}\label{ent-b}
\|\mu_a^{(n)} -u \|_{TV} \geq 1- e^{-\theta^2} - 2e^{-\frac{H(\mu)}{C_\mu^2}\theta^2}.
\end{equation}
Combined with Theorem \ref{TVcutoff}, this yields the following.

\begin{corollary} \label{cor4} Under the assumptions of Theorem \ref{TVcutoff} the Markov chain $(\ref{chain})$ exhibits a cut-off phenomenon in total variation at $\frac{1}{H(\mu)}\log p$ for most primes $p$ and most multipliers $a \in \F_p^\times$. More precisely, for any given $\eps \in (0,1)$ there is an exceptional set of primes $E$ of density at most $\eps$, such  that for all $p\notin E$  and for all but an $\eps$-fraction of all $a \in \F_p^\times$ we have:
$$|T(\delta) - \frac{1}{H(\mu)}\log p| \leq C_{\mu,\eps} \sqrt{\log p},$$
for all $\delta \in (\eps,1-\eps)$, where $C_{\mu,\eps}=O_\mu(\sqrt{\log(1/\eps)})$.
\end{corollary}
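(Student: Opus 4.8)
To prove Corollary \ref{cor4}, the plan is to read the upper bound for $T(\delta)$ off Theorem \ref{TVcutoff} and the matching lower bound off the entropy inequality $(\ref{ent-b})$, after tuning the free parameters $\theta$ in those two statements as functions of $\eps$ and $\mu$; what remains is then purely organizational, namely collecting the exceptional primes produced in each dyadic window into a single set of density at most $\eps$.

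For the upper bound I would first fix $\theta=\theta(\eps,\mu)$ so large that, with $\eps/2$ in place of $\eps$, the right-hand side $5(\eps/2)^{-1}e^{-\frac{H(\mu)}{16C_\mu^2}\theta^2}$ of Theorem \ref{TVcutoff} is at most $\eps$; since this only requires $\theta^2\ge\frac{16C_\mu^2}{H(\mu)}\log(10\eps^{-2})$, I may take $\theta=O_\mu(\sqrt{\log(1/\eps)})$. Applying Theorem \ref{TVcutoff} with $\kappa=1$ and with $\eps/2$ in place of $\eps$ then yields a threshold $X_*=X_*(\eps,\mu)$ (from the theorem's hypothesis, whose constant depends only on $\mu$ now that $\kappa$ is fixed) such that for every $X\ge X_*$, for all but an $(\eps/2)$-fraction of primes $p\in[e^X,2e^X]$ and all but an $(\eps/2)$-fraction of $a\in\F_p$, one has $\|\mu_a^{(n)}-u\|_{TV}\le\eps$ with $n=\lfloor(X+\theta\sqrt X)/H(\mu)\rfloor$. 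Since $\delta>\eps$ this forces $T(\delta)\le n$, and as $X\le\log p$ while $t\mapsto t+\theta\sqrt t$ is increasing, $n\le(\log p+\theta\sqrt{\log p})/H(\mu)$, so $T(\delta)\le\frac1{H(\mu)}\log p+\frac{\theta}{H(\mu)}\sqrt{\log p}$. Crucially the exceptional set of $a$ does not depend on $\delta$, so this holds simultaneously for all $\delta\in(\eps,1-\eps)$.

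For the lower bound I would invoke $(\ref{ent-b})$, which is unconditional and holds for every prime $p$ and every $a\in\F_p$. Fix $\theta'=\theta'(\eps,\mu)$ so large that $e^{-(\theta')^2}+2e^{-\frac{H(\mu)}{C_\mu^2}(\theta')^2}\le\eps$, again of size $O_\mu(\sqrt{\log(1/\eps)})$. Then for $p$ large enough that $\theta'<\sqrt{\log p}/2$ and with $m=\lfloor(\log p-\theta'\sqrt{\log p})/H(\mu)\rfloor$, inequality $(\ref{ent-b})$ gives $\|\mu_a^{(m)}-u\|_{TV}\ge1-\eps>\delta$ for every $\delta\in(\eps,1-\eps)$. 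Since the map $n\mapsto\|\mu_a^{(n)}-u\|_{TV}$ is non-increasing — the one-step transition operator is a contraction in total variation and fixes $u$ — this yields $T(\delta)>m$, hence $T(\delta)\ge m+1\ge(\log p-\theta'\sqrt{\log p})/H(\mu)=\frac1{H(\mu)}\log p-\frac{\theta'}{H(\mu)}\sqrt{\log p}$.

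It remains to control the exceptional primes. Enlarging $X_*$ so that both steps above apply for $p\ge e^{X_*}$, I cover $[e^{X_*},\infty)$ by the windows $[e^{X_k},2e^{X_k}]$ with $X_k=X_*+k\log 2$, and let $E$ be the set of all primes below $e^{X_*}$ together with, for each $k$, the $(\eps/2)$-fraction of primes in the $k$-th window flagged by Theorem \ref{TVcutoff}. For $N$ lying in the $K$-th window the number of primes in $E$ below $N$ is at most $\#\{q\le e^{X_*}:\ q\text{ prime}\}+(\eps/2)\#\{q\le 2N:\ q\text{ prime}\}$; dividing by the number of primes up to $N$ and letting $N\to\infty$, the prime number theorem gives that $E$ has (upper) density at most $\eps$. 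For $p\notin E$ and for all but an $(\eps/2)\le\eps$ fraction of $a\in\F_p$, the two estimates above then combine into $|T(\delta)-\frac1{H(\mu)}\log p|\le C_{\mu,\eps}\sqrt{\log p}$ for all $\delta\in(\eps,1-\eps)$, with $C_{\mu,\eps}=\max(\theta,\theta')/H(\mu)=O_\mu(\sqrt{\log(1/\eps)})$. I do not anticipate a genuine obstacle: all the substance sits in Theorem \ref{TVcutoff} and $(\ref{ent-b})$, and the points that want a little care are keeping both $\theta$ and $\theta'$ of size $O_\mu(\sqrt{\log(1/\eps)})$ — which is what produces the asserted $C_{\mu,\eps}$ — and the harmless halving of $\eps$ that keeps the dyadic bookkeeping from inflating the density of $E$.
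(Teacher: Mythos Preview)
Your proof is correct and follows essentially the same approach as the paper: the upper bound from Theorem \ref{TVcutoff} applied in dyadic windows with $\kappa=1$ and $\theta=O_\mu(\sqrt{\log(1/\eps)})$, the lower bound from $(\ref{ent-b})$, and the exceptional set assembled as the union over windows. Your treatment is in places slightly more explicit than the paper's --- you halve $\eps$ to keep the density bookkeeping clean and you invoke the TV-contraction property to pass from the single value $m$ in $(\ref{ent-b})$ to all $n\le m$, whereas the paper leaves this implicit (the proof of $(\ref{ent-b})$ in fact applies directly to any $n$ below the threshold).
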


We can also let $\eps$ tend to zero and get a set of primes with density $1$ for which a cut-off takes place, at the cost of loosening a bit the control on the cut-off window. See Corollary \ref{cor5}.

We will also prove an $\ell^2$ version of these results. The $\ell^2$ cut-off window is even narrower (of bounded size).  In fact Theorem \ref{TVcutoff} will be proven after we handle the analogous $\ell^2$ version by a small variation of that argument, see Section \ref{ell2}.

\subsection{Motivation} Our motivation for studying the Markov chain $(\ref{chain})$ is manifold. First of all this chain is the natural ``mod $p$ analogue'' of the classical Bernoulli convolutions we studied in \cite{breuillard-varju0}, where a close relationship between the entropy of the random walk and the Mahler measure of the multiplier was established. This enabled us to give a ``mod $p$'' reformulation of the Lehmer conjecture in \cite{breuillard-varju-Lehmer}. On the other hand the chain $(\ref{chain})$ is intimately related to random polynomials of large degree and is a crucial tool in our recent paper on irreducibility of random polynomials \cite{breuillard-varju-irred}, where Konyagin's estimate $(\ref{kon})$ is used to establish irreducibility under GRH\footnote{Here and everywhere in this paper, when we say that we assume GRH, we mean that we assume that the Dedekind zeta function of an arbitrary number field satisfies the Riemann hypothesis, namely has its non trivial zeroes on the critical line.}. The latter paper and the current one can be seen as companion papers, since the main irreducibility result from \cite{breuillard-varju-irred} will be the key ingredient in the proof of the cut-off in Theorem \ref{TVcutoff}. Finally the chain $(\ref{chain})$, which can also be described in terms of random walks on the affine group of the line,  can be seen as a toy model for many more sophisticated random walks on groups or homogeneous spaces of algebraic origin. 

\subsection{Strategy} We now briefly describe the main strategy behind the proof of the above results. It starts with the observation that the chain $(\ref{chain})$ at time $n$ is given by the value at $a \in \F_p^\times$ of a random polynomial \begin{equation}\label{rpo}P(x)=b_0x^{n-1}+ \ldots + b_{n-2}x+b_{n-1}\end{equation} where the $b_i \in \Z$'s are as before i.i.d.  
random variables with law $\mu$. In particular the law of $P(a)$ is exactly $\mu_a^{(n)}$. Furthermore, we can write

$$\|\mu_a^{(n)}\|_2^2 = \sum_{y \in \F_p} \mu_a^{(n)}(y)^2=  \sum_{y \in \F_p} \P(P_1(a)=P_2(a)=y)=\P(P_1(a)=P_2(a))$$
where $P_1$ and $P_2$ are two independent random polynomials as in $(\ref{rpo})$

Summing over $a \in \F_p$, we get
\begin{equation}\label{number}\sum_{a \in \F_p} \|\mu_a^{(n)}\|_2^2 = \E(N_p(P)),\end{equation}
where $P\in \Z[x]$ is a random polynomial with the same law as $P_1-P_2$ and $N_p(P)$ is the number of distinct roots of $P$ in $\F_p$. If $P=0$, then $N_p(P)=p$ of course, while if $P \neq 0$, we can use the crude upper bound $N_p(P) \leq n$. This yields
$$\sum_{a \in \F_p} \|\mu_a^{(n)}\|_2^2 \leq n + \P(P=0)p.$$
Together with a further trick exploiting the self-similarity of the law $$\mu_a^{(n+m)}= \mu_a^{(n)}*a^n.\mu_a^{(m)}$$ this will yield Theorem \ref{loglog}. 

In order to prove the cut-off in Theorem \ref{TVcutoff}, we need to perform a further averaging over primes and use the following key arithmetical ingredient, which is a special case of Chebotarev's theorem or just the prime ideal theorem: if $P\in \Z[x]$ is irreducible over $\Q$, then, on average over the primes $p$, $N_p(P)$ is equal to $1$. In other words as $X \to +\infty$
$$\E_{p}( N_p(P) )=  1 + \textnormal{error}$$ where $\E_p$ denotes the average over primes $p\leq X$ and the error tends to $0$ as $X \to +\infty$.
The quality of the error depends on what is known about the zeroes of the Dedekind zeta function $\zeta_K(z)$ of the number field $K:=\Q(x)/(P)$ near $z=1$. Assuming GRH, we get a very good error term. It is important to control the error term uniformly as $P$ varies, so explicit bounds in terms of the degree and discriminant of $P$ are necessary.

We proved in our previous paper \cite{breuillard-varju-irred} that $P$ is irreducible with high probability, and this implies that
the main contribution to the right hand side of \eqref{number} comes when $P$ is either irreducible or equal to $0$.
In the latter case we have $N_p(P)=p$, as we remarked above.
So we obtain
$$ \E_{p} \big(\sum_{a \in \F_p} \|\mu_a^{(n)}\|_2^2 ) \simeq  1+  \P(P=0) \E_{p}(p) $$
from $(\ref{number})$. 

Note that $\P(P=0)=\|\mu\|_2^{2n}$ since the $b_i$'s are i.i.d. and that $p\|\mu_a^{(n)}\|_2^2-1 = p\|\mu_a^{(n)}-u\|_2^2 \geq 0$. Now by the Markov inequality
$$ p\|\mu_a^{(n)} - u \|_2^2  \ll  p\|\mu\|_2^{2n}$$  
 holds for most $p$'s and most $a$'s.  In other words  (since $\|\mu_a^{(n)}\|_2 \geq \|\mu\|_2^{n}$) $$ \|\mu\|_2^{2n} p - 1 \leq  p\|\mu_a^{(n)}-u\|_2^2 \ll  \|\mu\|_2^{2n} p.$$ What this means is precisely that  we will see an $\ell^2$ cut-off exactly when $\|\mu\|_2^{2n} p$ becomes small, i.e. when $n \simeq \frac{1}{H_2(\mu)} \log p$. 

The cut-off in total variation of Theorem \ref{TVcutoff} is deduced from the $\ell^2$ cut-off estimate we have just described by restricting to a large part of the sample space where $P$ is roughly uniform. Finally Theorem \ref{uncond} is the result of a more precise analysis of what happens if we do not assume GRH and how we can make do with a weaker error term  and no irreducibility of $P$  using instead estimates on the number of zeroes of Dedekind zeta functions and Dobrowolski's Mahler measure lower bound to get a bound on the number of irreducible factors of $P$.

\subsection{Organization} The paper is organized as follows. In Section \ref{konyagin}, we discuss the general upper bound $(\ref{kon})$, which can be deduced from Konyagin's estimates. We include this for comparison with our new results. In Section \ref{multi}, we prove Theorem \ref{loglog} using a multiplicity trick exploiting the ``self-similarity'' feature of the chain $(\ref{chain})$. In Section \ref{ell2}, we prove Theorem \ref{ell2cutoff}, which establishes the cut-off in $\ell^2$ under GRH with a bounded cut-off window. Section \ref{total} is devoted to the proof of Theorem \ref{TVcutoff} and Corollary \ref{cor4}. Theorem \ref{uncond} will be established in Section \ref{un-bound}.  Finally, in Section \ref{lehmer-sec}, we mention mixing in $\ell^q$-norm, diameter bounds and connections with the Lehmer problem, and we indicate some directions of future research.

\section{The general upper bound}\label{konyagin} In this section, we discuss a general polylogarithmic upper bound on the mixing time. It was observed by Bukh, Harper, Helfgott and Lindenstrauss (see \cite{Hel-Arizona}*{Exercises 3.13 and 3.14}) that such a bound can be deduced easily from Konyagin's main lemma in \cite{Kon-RW}. More precisely we aim at the following.

\begin{theorem}[Konyagin, Bukh, Harper, Helfgott, Lindenstrauss]\label{konthm} There is an absolute constant $C>0$ such that the following holds. Let $p$ be a prime and $\mu$ a probability measure on $\F_p$ supported on  at least two elements. Consider the Markov chain $(\ref{chain})$ for a residue $a \in \F_p^{\times}$ with multiplicative order at least $C\log p (\log \log\log p)$. Then for every $\delta \in (0,\frac{1}{2})$,  we have
$$T(\delta) \leq T_2(\delta) \leq \frac{C}{1-\|\mu\|_2^2} \log(\delta^{-1}p)\log p (\log \log p)^{5}.$$
\end{theorem}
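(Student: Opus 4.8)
The plan is to reduce the mixing-time bound to an $\ell^2$ estimate and then to feed in Konyagin's main lemma, which controls how much the $\ell^2$-norm of $\mu_a^{(n)}$ drops after a bounded number of steps. Recall from the discussion in the introduction that $\|\mu_a^{(n)}\|_2^2 = \P(P_1(a) = P_2(a))$ where $P_1, P_2$ are independent random polynomials of degree $< n$ with i.i.d.\ coefficients of law $\mu$. The key structural fact is the self-similarity identity $\mu_a^{(n+m)} = \mu_a^{(n)} * a^n.\mu_a^{(m)}$, which by Young's inequality gives $\|\mu_a^{(n+m)} - u\|_2 \leq \|\mu_a^{(n)} - u\|_2 \cdot \|p^{1/2}(\mu_a^{(m)} - u)\|_\infty$-type control; more usefully, writing things multiplicatively in terms of the normalized $\ell^2$ distance $D(n) := p\|\mu_a^{(n)} - u\|_2^2 = p\|\mu_a^{(n)}\|_2^2 - 1$, one expects a submultiplicative-type recursion $D(n+m) \leq D(n) D(m)$ or similar once one passes to Fourier coefficients. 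Indeed on the Fourier side $\widehat{\mu_a^{(n)}}(\xi) = \prod_{j=0}^{n-1} \widehat{\mu}(a^j \xi)$, so $D(n) = \sum_{\xi \neq 0} |\widehat{\mu}(\xi)|^2 |\widehat{\mu}(a\xi)|^2 \cdots |\widehat{\mu}(a^{n-1}\xi)|^2$, and the whole game is to show this geometric-like product over the orbit $\{\xi, a\xi, \ldots\}$ decays.

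The main input, Konyagin's lemma, says (roughly) that if $a$ has multiplicative order $\geq t$ in $\F_p^\times$, then for $m$ of size about $(\log p)^2 (\log\log p)^5$ one has $\|p^{1/2} \mu_a^{(m)}\|_\infty \leq 2$, i.e.\ $\mu_a^{(m)}$ is already within a factor $2$ of uniform in $\ell^\infty$; equivalently the Fourier coefficients $\prod_{j=0}^{m-1}|\widehat\mu(a^j\xi)|$ are small for all $\xi\neq0$ on a large proportion of the orbit. The step I would carry out is: first invoke Konyagin to get that after $m_0 \asymp (\log p)^2(\log\log p)^5$ steps the normalized $\ell^2$ distance $D(m_0)$ is bounded by a constant, in fact by something like $p^{c}$ for small $c$ — here one uses the crude bound $D(m_0) \le p \|\mu_a^{(m_0)}\|_\infty \|\mu_a^{(m_0)}\|_1 \le 2 p^{1/2}\cdot p^{1/2}$ is too weak, so instead one splits the sum over $\xi$ into those where the orbit product is already tiny (controlled by Konyagin's quantitative statement) and a negligible remainder. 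Then one iterates: running the chain for another block of $O(\log p \log\log p)$ steps multiplies $D$ by a factor $\|\mu\|_2^2 + o(1)$ per step roughly, or more honestly one combines the self-similarity identity with the first estimate to get that $O(\log\delta^{-1} / (1-\|\mu\|_2^2))$ further steps bring $D$ below $\delta^2$.

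Concretely I would structure it as: (1) set $m_0 = C(\log p)^2(\log\log p)^5$, apply Konyagin's main lemma under the hypothesis $\mathrm{ord}(a) \geq C\log p\log\log p$ to obtain $\|\widehat{\mu_a^{(m_0)}}\|_\infty \leq 1/2$ say, hence $D(m_0) \leq 1/4 \cdot (\text{something})$ — actually one needs $\sum_{\xi\neq 0}|\widehat{\mu_a^{(m_0)}}(\xi)|^2 \le 1$, which follows from Parseval since $\|\mu_a^{(m_0)}\|_2^2 \le \|\mu_a^{(m_0)}\|_\infty\|\mu_a^{(m_0)}\|_1 \le \frac{1}{2p}$; (2) use $\widehat{\mu_a^{(m_0+k)}}(\xi) = \widehat{\mu_a^{(m_0)}}(\xi)\prod_{j=m_0}^{m_0+k-1}\widehat\mu(a^j\xi)$ together with $\sum_\xi \prod|\widehat\mu(a^j\xi)|^2$-type averaging — but cleanly, invoke the submultiplicativity $D(n+m)\le D(n)D(m)$ noting $D(1) = \|\mu\|_2^2 p - 1$ is not $<1$, so instead use that after $m_0$ steps one is in the regime where additional steps contract: $D(m_0 + k) \le D(m_0)\cdot\max_{\xi\ne0}\prod_{j=m_0}^{m_0+k-1}|\widehat\mu(a^j\xi)|^2 \le D(m_0) e^{-c k (1-\|\mu\|_2^2)}$ once the orbit is long enough that the geometric-mean of $|\widehat\mu(a^j\xi)|^2$ over a window of length $k$ is bounded by $\|\mu\|_2^{2}+o(1)<1$; (3) choose $k \asymp \frac{1}{1-\|\mu\|_2^2}(\log\delta^{-1})$ to finish, and absorb constants.

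The main obstacle, and the place where Konyagin's work does the heavy lifting, is precisely step (1)/(2): showing that the Fourier product $\prod_j |\widehat\mu(a^j\xi)|$ over an orbit of a multiplicatively large element $a$ genuinely decays for \emph{every} nonzero $\xi$ simultaneously. This is a nontrivial arithmetic statement — one cannot just bound $|\widehat\mu(a^j\xi)|$ termwise since it can be close to $1$ for many $j$ — and it requires Konyagin's combinatorial/analytic argument controlling the number of $j$ in a long interval for which $a^j\xi$ lands in a small arc; the $(\log\log p)^5$ factor is the cost of this. Since the statement explicitly credits Konyagin, Bukh, Harper and Helfgott, in the writeup I would state Konyagin's main lemma precisely (with the dependence on $\mathrm{ord}(a)$), cite \cite{Kon-RW}, and then present steps (2)--(3) in detail as the routine deduction; the self-similarity/submultiplicativity bookkeeping and the passage from $\ell^2$ to total variation via Cauchy–Schwarz are then straightforward.
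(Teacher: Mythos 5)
Your framework---Fourier-expanding $\widehat{\mu_a^{(n)}}(\xi)=\prod_{j<n}\widehat\mu(a^j\xi)$, invoking Konyagin's lemma to show the product decays, and finishing with Parseval---is the right one and matches the paper. But you misstate the key input. What Konyagin's lemma (Lemma \ref{kon-lemma}) actually says is the arithmetic statement that for every $x_0\in\F_p^\times$ and every $m\ge C\log p(\log\log p)^4$, if $a$ has multiplicative order $\geq C\log p\log\log p$ then $\sum_{i=0}^m[x_0a^i/p]^2\ge\frac{1}{C\log\log p}$. It is \emph{not} an $\ell^\infty$ bound on $\mu_a^{(m)}$---that would essentially be the conclusion of the theorem, not its input. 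The paper couples this lemma with the elementary bound $|\widehat\mu(t)|\le\exp\bigl(-4\sum_{x\ne0}\nu(x)[tx/p]^2\bigr)$, where $\nu$ is the law of the difference of two i.i.d.\ $\mu$-samples (so $1-\nu(0)=1-\|\mu\|_2^2$). Summing Konyagin's lower bound over consecutive blocks of length $\asymp\log p(\log\log p)^4$ along the orbit $\{\xi,a\xi,\ldots\}$ then gives, directly, $|\widehat{\mu_a^{(n)}}(\xi)|^2\le\exp\bigl(-cn(1-\|\mu\|_2^2)/(\log p(\log\log p)^5)\bigr)$ for \emph{every} $\xi\ne0$, and a single application of Parseval finishes the proof. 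No iterated submultiplicativity of $D(n)$ is needed.

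The more serious gap is in your step (2), where you assert that "the geometric-mean of $|\widehat\mu(a^j\xi)|^2$ over a window of length $k$ is bounded by $\|\mu\|_2^2+o(1)$", i.e.\ a per-step contraction factor $\approx\|\mu\|_2^2$. Konyagin's lemma gives nothing close to that: it only yields a drop by a fixed factor $\exp\bigl(-c(1-\|\mu\|_2^2)/\log\log p\bigr)$ per block of $\asymp\log p(\log\log p)^4$ steps, so the effective per-step rate is $\asymp(1-\|\mu\|_2^2)/(\log p(\log\log p)^5)$, vastly slower than $H_2(\mu)$. If your claimed rate were available, it would give unconditional $\ell^2$-mixing in $O(\log p/H_2(\mu))$ steps, which is precisely the much harder content of Theorems \ref{uncond} and \ref{ell2cutoff}; so you are implicitly assuming what needs to be proved. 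Finally, the inequality $\|\mu_a^{(m_0)}\|_2^2\le\|\mu_a^{(m_0)}\|_\infty\|\mu_a^{(m_0)}\|_1\le\frac{1}{2p}$ cannot hold: for any probability distribution on $p$ points, Cauchy--Schwarz gives $\|\cdot\|_2^2\ge1/p$, with equality only for the uniform distribution.
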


We include this result, because it provides a good comparison standpoint for the theorems of our paper that deal with mixing bounds for most values of the multiplier $a$ rather than all values as in Theorem \ref{konthm}. We will not use this result directly in this paper; however, a more general version of it (proved in Section 5.2 of \cite{breuillard-varju-irred}) is used crucially in our companion paper \cite{breuillard-varju-irred}, which is needed for Theorem \ref{TVcutoff}.

 We will use the following notation:  for $u\in \R$ we set $\widetilde{u}$ to be the unique representative of $u$ modulo $\Z$ in the interval $(-\frac{1}{2},\frac{1}{2}]$. It clearly depends only on the class of $u$ modulo $\Z$. For ease of notation, we identify the additive group of $\F_p$ with a subgroup of $\R/\Z$  by  viewing $x \mod p\Z$  as $x/p \in \R/\Z$.  We write $\log^{(n)}$ for the $n$-th iteration of $\log$. 

\begin{lemma}[Konyagin]\label{kon-lemma} There is an absolute constant $C\ge 1$ such that if $p$ is a prime and $a \in \F_p^{\times}$, for every $x_0\in \F_p^{\times}$ and every $m \ge C\log p (\log^{(2)} p)^4$, either $a$ has multiplicative order  at most $C\log p \log^{(3)} p$, or we have
\begin{equation}\label{eq:kon-conclusion}
\sum_{i=0}^m \Big(\widetilde{\frac{x_0a^i}{p}}\Big)^2 \ge \frac{1}{C \log^{(2)} p}.
\end{equation}
\end{lemma}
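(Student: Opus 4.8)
The plan is to argue by contradiction: assume that $a$ has multiplicative order $d > C\log p\log\log p$ and yet $\Sigma:=\sum_{i=0}^m[x_0a^i/p]^2 < \frac1{C\log\log p}$, and derive a contradiction once the absolute constant $C$ is large enough. Let $y_i\in(-p/2,p/2]$ be the integer representative of $x_0a^i\bmod p$, so that $[x_0a^i/p]=y_i/p=:\theta_i$ and $\theta_{i+1}=[a\theta_i]$; thus $(\theta_i)_{0\leq i\leq m}$ is a segment of the orbit of $\theta_0=x_0/p\in\tfrac1p\Z/\Z$ under the integer-multiplication map $\theta\mapsto a\theta$ on $\R/\Z$. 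Since every summand of $\Sigma$ is nonnegative, $\Sigma<\frac1{C\log\log p}$ forces $|y_i|<p/\sqrt{C\log\log p}$ for every $i$, and more generally, by Markov's inequality, for any $r>0$ at most $\Sigma p^2/r^2$ indices satisfy $|y_i|>r$; hence at scale $r$ the index set $\{0,\dots,m\}$ splits, after deleting at most $\Sigma p^2/r^2$ indices, into at most $\Sigma p^2/r^2+1$ maximal runs of consecutive indices along which $|y_i|\leq r$. In particular the whole segment is a single run of length $m+1$ confined to $[-p/L_0,p/L_0]$ with $L_0:=\sqrt{C\log\log p}$, so everything reduces to bounding the length of runs of $\theta\mapsto a\theta$ confined to short symmetric arcs around $0$.

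The one clean input is a rigidity statement below scale $\sqrt p$. Three consecutive orbit points satisfy $y_{i+1}^2\equiv y_iy_{i+2}\pmod p$, both sides being $x_0^2a^{2i+2}$ in $\F_p$; if a run $y_i,\dots,y_{i+\ell}$ is contained in $[-r,r]$ with $2r^2<p$, these congruences are equalities of integers, and by induction on $k$ one gets $y_{i+k}=q^ky_i$ for $0\leq k\leq\ell$, where $q:=y_{i+1}/y_i\in\Q^\times$ and, after cross-multiplying, $a\equiv q\pmod p$. Writing $q=s/t$ in lowest terms, integrality of $q^\ell y_i$ forces $t^\ell\mid y_i$ when $t\geq2$ and $|s|^\ell\leq|q^\ell y_i|\leq r$ when $t=1$; since $1\leq|y_i|\leq r$, in both cases $\ell\leq\log_2 r\leq\tfrac12\log_2 p$, unless $|q|=1$, i.e. $a\equiv\pm1$ and $d\leq2$, which is excluded. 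Hence any run contained in $[-r,r]$ with $r\leq\sqrt p$ has length $O(\log p)$.

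The difficulty --- and the step I expect to be the main obstacle --- is that, as the bound $\Sigma p^2/r^2$ shows, already at scale $r=\sqrt p$ the number of deleted indices can far exceed $m+1$, so the orbit segment need never drop below scale $\sqrt p$ and the previous rigidity does not close the argument by itself. One must instead control runs of $\theta\mapsto a\theta$ confined to an arc $[-1/L,1/L]$ with $L$ only polyloglarithmic in $p$, a range in which $y_{i+1}^2\equiv y_iy_{i+2}$ is no longer an equality. This is the delicate heart of Konyagin's argument: such confinement imposes rigid combinatorial structure on $\theta_0$ --- heuristically, its digit string in a suitable base must consist predominantly of long constant blocks --- which is incompatible with $\theta_0$ having exact denominator $p$ together with $a$ having multiplicative order as large as $d$; the outcome is a quantitative upper bound $B(L,p)$ for the length of a run confined to $[-1/L,1/L]$, valid when $d>C\log p\log\log p$, that is however too weak at the coarse scale $L_0$ to conclude on its own.

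To finish, one feeds this bound back through the multiscale run decomposition: because $\Sigma$ is small, at every finer scale $L'\geq L_0$ the length-$(m+1)$ run is confined to $[-p/L',p/L']$ apart from at most $\Sigma L'^2$ of its indices, hence splits into at most $\Sigma L'^2+1$ sub-runs, each of length at most $B(L',p)$, giving $m+1\leq(\Sigma L'^2+1)\,B(L',p)$. Optimizing the choice of $L'$ --- balancing $\Sigma L'^2$ against $B(L',p)$ is precisely where the exponent $4$ in the hypothesis $m\geq C\log p(\log\log p)^4$ and the size $1/\log\log p$ of the conclusion originate --- produces a bound $m+1\leq C'\log p(\log\log p)^4$ with $C'$ an absolute constant independent of $C$; choosing $C>C'$ contradicts $m\geq C\log p(\log\log p)^4$ and completes the proof. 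Throughout, $x_0$ enters only through the fact that $(y_i)_i$ is an orbit of multiplication by $a$, so all bounds are uniform in $x_0$.
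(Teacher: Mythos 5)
There is a genuine gap, and you have in fact flagged it yourself: the quantitative bound $B(L,p)$ on the length of a run confined to $[-1/L,1/L]$ at polylogarithmic scale $L$ is asserted but never established, and that is precisely the entire content of the lemma. The part you do prove --- the rigidity of runs below scale $\sqrt{p}$ via $y_{i+1}^2 \equiv y_i y_{i+2} \pmod p$ becoming an integer identity --- is correct and pleasant, but by your own observation it cannot close the argument because the sum being $<1/(C\log\log p)$ only confines the orbit to $|y_i|\le p/\sqrt{C\log\log p}$, which is enormously larger than $\sqrt p$. Everything of substance is compressed into the sentence beginning ``This is the delicate heart of Konyagin's argument\ldots'' and is left unproved.

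Beyond incompleteness, the route you sketch is structurally different from what the lemma actually rests on, and the discrepancy is visible in how you use the hypotheses. The paper does not prove this lemma from scratch: it reduces it to \cite{breuillard-varju-irred}*{Proposition 24}, which, under the assumption that \eqref{eq:kon-conclusion} fails, produces a nonzero polynomial $P\in\Z[x]$ of degree $\le 3\log p$ with $P(a)=0$ in $\F_p$ and Mahler measure $M(P)\le (\log p)^{30\log p/m}\le \exp(30C^{-1}(\log\log p)^{-3})$. The final step is Dobrowolski's theorem: either $M(P)\ge\exp\bigl(c(\log\log\deg P/\log\deg P)^3\bigr)$, or $P$ is a product of cyclotomics. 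Choosing $C$ large rules out the first alternative, and the second forces $a$ to be a root of a cyclotomic of degree $O(\log p\log\log p)$, hence to have small order. Notice that the exponents $(\log\log p)^4$ in the hypothesis and $1/(\log\log p)$ in the conclusion are precisely the shadow of Dobrowolski's cubed logarithm, and that the multiplicative-order conclusion comes from the cyclotomic alternative --- neither Mahler measure, nor Dobrowolski, nor cyclotomic polynomials appear anywhere in your proposal. Your only use of the large-order assumption is to exclude $a\equiv\pm1$, which is far weaker than what the real argument requires. If you want to reconstruct the proof rather than cite \cite{breuillard-varju-irred}*{Proposition 24}, the missing idea is that a long confined run lets you build, via lattice reduction or a greedy construction from the small orbit values $y_i$, an auxiliary integer polynomial of controlled degree and small Mahler measure vanishing at $a$ mod $p$; the run-length bound $B(L,p)$ you are after is exactly what falls out once Dobrowolski forbids such a polynomial from being non-cyclotomic.
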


This lemma is a very slight improvement of \cite{Kon-RW}*{Lemma 6} and a special case of \cite{breuillard-varju-irred}*{Proposition 24}. The proof in \cite{breuillard-varju-irred} closely follows Konyagin's original argument in \cite{Kon-RW} and in the setting of Lemma \ref{kon-lemma} it does not introduce any new ideas.

For the convenience of the reader, we explain how to deduce Lemma \ref{kon-lemma} from \cite{breuillard-varju-irred}*{Proposition 24}. We take $M=m_1=D=1$, $Q=p$, $\b=x_0$ and $\a=a$ in \cite{breuillard-varju-irred}*{Proposition 24}, see \cite{breuillard-varju-irred}*{Section 5.1} for the notation. 
Taking $L=m$ in \cite{breuillard-varju-irred}*{Proposition 24}, we get that either \eqref{eq:kon-conclusion} holds, or there is a polynomial $P\in\Z[x]$ of degree at most $3\log p$ with Mahler measure at most
\begin{equation}\label{eq:Mahler-lower}
M(P)\le(\log p)^{30\log p/L}\le\exp(30C^{-1}(\log\log p)^{-3})
\end{equation}
such that $P(a)=0$. Recall that if $Q\in \Z[x]$ is a polynomial $Q=a_0+a_1x+\ldots+a_nx^n$, then $M(Q)$ is defined as \begin{equation}\label{eq:def-Mahler}M(Q)=|a_n| \prod_1^n \max\{1,|\alpha_i|\}\end{equation} where the $\alpha_i$'s are the complex roots of $Q$.  By the classical Dobrowolski bound \cite{Dob-Lehmer} on Mahler measure, either $$M(P)\ge \exp({c (\frac{\log\log \deg P}{\log \deg P})^3}) \ge \exp({c \frac{1}{(\log \log p)^3}})$$
for some absolute constant $c$, or $P$ is a product of cyclotomic polynomials. If we choose $C$ sufficiently large in \eqref{eq:Mahler-lower}, then we must have the second case, which makes the multiplicative order of $a$ less than $C\log p \log^{(3)} p$ (recall that the degree of the $n$-cyclotomic polynomial is Euler's totient function, which satisfies the estimate $\phi(n) \gg n/\log^{(2)} n$.)

\begin{proof}[Proof of Theorem \ref{konthm}] We aim at proving a good upper bound on the size of the Fourier coefficients of $\mu_a^{(n)}$. For $a \in \F_p$, set $\pi_a:\F_p[x] \to \F_p$ be the evaluation map $P \mapsto P(a)$. Recall that  $\mu_a^{(n)} = \pi_a(\mu^{(n)})$ is the law of $P(a)$, where $P(x)=b_0x^{n-1}+\ldots + b_{n-2}x +b_{n-1}$ is a random polynomial with law $\mu^{(n)}$, that is, the coefficients $b_i$'s are i.i.d. with law $\mu$ supported on $\F_p$.

The Fourier coefficient
$$\widehat{\mu_a^{(n)}}(\xi):= \E\Big(e(\frac{\xi P(a)}{p})\Big),$$
where $e(u):=\exp(2i\pi u)$, can be written, by independence of the $b_i$'s as
\begin{equation}\label{prodphi}\widehat{\mu_a^{(n)}}(\xi):= \prod_{i=0}^{n-1}\phi(\xi a^i),\end{equation}
where $$\phi(t)=\E(e(\frac{tb_0}{p}))= \sum_{x \in \F_p} \mu(x) e(\frac{tx}{p})$$ for $t\in \F_p$. We claim that
\begin{equation}\label{claim0}|\phi(t)|\leq \exp(- 4\sum_{x \neq 0} \nu(x) \Big(\widetilde{\frac{tx}{p}}\Big)^2),\end{equation}
where $\nu$ is the law of $b_0-b_1$.  Indeed, by easy calculus $\cos(2\pi u) \leq 1- 8\widetilde{u}^2$ for all $u \in \R$. Then
$$1-|\phi(t)|^2=1-\E(e(\frac{t(b_0-b_1)}{p}))= \sum_{x\neq0} \nu(x)(1-\cos(2\pi \frac{tx}{p})) \ge 8\sum_{x\neq 0}\nu(x)\Big(\widetilde{\frac{tx}{p}}\Big)^2$$
 and  (since $1-x\leq e^{-x}$ for all $x$) $(\ref{claim0})$ follows.
 
 Now $(\ref{prodphi})$ yields
$$|\widehat{\mu_a^{(n)}}(\xi)| \leq \exp(-4 \sum_{x \neq 0}\nu(x) \sum_{i=0}^{n-1} \Big(\widetilde{\frac{\xi x a^i}{p}}\Big)^2).$$
Note that $\nu(0)=\|\mu\|_2^2$.  Now splitting the sum of length $n$ into intervals of length $C \log p (\log^{(2)} p)^4$ to which we apply Lemma \ref{kon-lemma}, we obtain
$$|\widehat{\mu_a^{(n)}}(\xi)|^2 \leq \exp(-\frac{4n(1-\|\mu\|_2^2)}{C^2 \log p (\log^{(2)} p)^5} ),$$
provided $\xi\neq0$ and  $n \ge C\log p (\log^{(2)} p)^4$. This bound becomes $\leq \frac{\delta^2}{p}$, provided
$$n \ge \frac{C^2}{(1-\|\mu\|_2^2)} \log(\delta^{-1}p) \log p (\log^{(2)} p)^5.$$
If so, then by Parseval's identity
$$\|\mu_a^{(n)} - u\|_1^2 \leq p \|\mu_a^{(n)} - u\|_2^2 = \sum_{\xi \neq 0} |\widehat{\mu_a^{(n)}}(\xi)|^2 \leq \delta^2.$$
\end{proof}

\section{Multiplicity of Fourier coefficients}\label{multi}
In this section, we prove Theorem \ref{loglog}. The proof relies on the following observations about the Fourier coefficients of $\mu_a^{(n)}$ that come from the self-similar nature of the measure (a key feature of Bernoulli convolutions). For every $n,m \ge 0$, 
\begin{equation}\label{self-similar}\mu_a^{(n+m)}= \mu_a^{(n)}*a^n.\mu_a^{(m)}\end{equation}
where $*$ is the convolution product and $a^n.\mu_a^{(m)}$ is the image of $\mu_a^{(m)}$ under the multiplication map $x \mapsto a^nx$. This also plays a role in the paper \cite{CDG-RW}. From \eqref{self-similar}, it follows that for every $\xi \in \F_p$, if $0\leq r\leq s$,
$$ |\widehat{\mu_a^{(s)}}(\xi)| \leq  |\widehat{\mu_a^{(r)}}(\xi)|.$$ Moreover, for $\xi\neq0$, we have
\begin{equation} |\widehat{\mu_a^{(n+m)}}(\xi)| \leq |\widehat{\mu_a^{(n)}}(\xi)|\sup_{x \neq 0} |\widehat{\mu_a^{(m)}}(x)|,\label{firstline}\end{equation}
and iterating for any $k\ge 1$,
\begin{equation}\label{powers} |\widehat{\mu_a^{(km)}}(\xi)| \leq [\sup_{x \neq 0} |\widehat{\mu_a^{(m)}}(x)|]^k.\end{equation}
Similarly, using $\mu_a^{(n+m)}= \mu_a^{(i)}*a^i.\mu_a^{(n)}*a^{n+i}.\mu_a^{(m-i)}$  in place of  $(\ref{self-similar})$,
\begin{equation}
 |\widehat{\mu_a^{(n+m)}}(\xi)| \leq \inf_{i=0,\ldots,m} |\widehat{\mu_a^{(n)}}(a^i\xi)|. \label{secline}
\end{equation}
We can interpret $(\ref{secline})$ as a form of high multiplicity of the Fourier coefficients of $\mu_a^{(n)}$. Indeed it implies that if $\mu_a^{(n+m)}$ has one large Fourier coefficient, then $\mu_a^{(n)}$ must have at least $m$ large Fourier coefficients  provided $a$ has order at least $m$. From $(\ref{firstline})$ we have on the one hand
$$p\|\mu_a^{(2n+m)} -u \|_2^2=\sum_{\xi \neq 0} |\widehat{\mu_a^{(2n+m)}}(\xi)|^2 \leq \sum_{\xi \neq 0} |\widehat{\mu_a^{(n)}}(\xi)|^2  \sup_{\xi \neq 0} |\widehat{\mu_a^{(n+m)}}(\xi)|^2$$
and if $a$ has order $>m$ in $\F_p^{\times}$, by $(\ref{secline})$
\begin{equation}\label{self-improve}(m+1) \sup_{\xi \neq 0} |\widehat{\mu_a^{(n+m)}}(\xi)|^2 \leq \sup_{\xi \neq 0}\sum_{0 \le i \le m} |\widehat{\mu_a^{(n)}}(a^i\xi)|^2 \leq \sum_{\xi \neq 0} |\widehat{\mu_a^{(n)}}(\xi)|^2.\end{equation}
In particular, combining $(\ref{self-improve})$ with $(\ref{powers})$ we get for all $k,m,n\ge 0$:
\begin{equation}\label{goodbound}p\|\mu_a^{(n+k(m+n))} -u \|_2^2 \leq\sum_{\xi \neq 0} |\widehat{\mu_a^{(n)}}(\xi)|^2 (\frac{  \sum_{\xi \neq 0} |\widehat{\mu_a^{(n)}}(\xi)|^2}{m+1})^k \leq \frac{(p\|\mu_a^{(n)}\|_2^2)^{k+1}}{(m+1)^k}\end{equation} provided $a$ has order at least $m+1$ in $\F_p^\times$.

In conclusion we have shown that, when $a$ has sufficiently large multiplicative order,  the self-similarity $(\ref{self-similar})$ of the law implies an automatic decay of Fourier coefficients.  We can exploit this to prove Theorem \ref{loglog}.

\begin{proof}[Proof of Theorem \ref{loglog}]Recall our previous notation. For $a \in \F_p$,  $\pi_a:\F_p[x] \to \F_p$ is the evaluation map $P \mapsto P(a)$ and  $\mu_a^{(n)} = \pi_a(\mu^{(n)})$ is the law of $P_1(a)$, where $P_1(x)=b_0x^{n-1}+\ldots + b_{n-2}x +b_{n-1}$ is a random polynomial whose coefficients $b_i$'s are i.i.d. with law $\mu$ supported on $\F_p$. Then we make the crucial observation
$$\sum_{a \in \F_p} \|\mu_a^{(n)}\|_2^2 = \sum_{a \in \F_p} \P(P_1(a)=P_2(a)) = \E(N_p(P)),$$
where $N_p(P)$ is the number of distinct roots of $P$ in $\F_p$ and $P=P_1-P_2$ for two independent random polynomials $P_1,P_2$ distributed as above. If $P=0$, $N_p(P)=p$, while if $P\neq 0$, $N_p(P)\leq n$. Hence
$$\sum_{a \in \F_p} \|\mu_a^{(n)}\|_2^2 \leq p\|\mu\|_2^{2n} + n \leq 1+n,$$
if $n \ge \frac{1}{H_2(\mu)}\log p$. We thus choose to set $n=\lceil \frac{1}{H_2(\mu)}\log p \rceil$. By the Markov inequality, it follows that for all but an $\eps/2$-fraction of all residues $a \in \F_p$ we have
$$p\|\mu_a^{(n)}\|_2^2 \leq 2\eps^{-1}(1+ n).$$
Pick $m$ so that $10\eps^{-1}(n+1) \ge m+1 \ge 8\eps^{-1}(n+1)$. Then $p\|\mu_a^{(n)}\|_2^2/(m+1)\leq e^{-1}$ and choosing the least $k$ such that $e^{-k} \leq \delta^2 \eps / (n+1)$, we may apply $(\ref{goodbound})$ to conclude
$$\|\mu_a^{(M)} -u \|_1^2 \leq p\|\mu_a^{(M)} -u \|_2^2 \leq \delta^2$$
with 
$$M = n+k(m+n) = O(\eps^{-1} n \log(\eps^{-1}\delta^{-1}n)).$$ To apply $(\ref{goodbound})$, we need that the multiplicative order of $a$ is at least $m$, which holds for all but at most $m^2$ choices of $a$. Excluding these, the claim still holds for all but an $\e$-fraction of the residues $a\in\F_p$, provided $p$ is large enough depending on $\e$, which we may assume, otherwise the theorem is vacuous. The result follows.
\end{proof}

\section{Cut-off in $\ell^2$}\label{ell2}
We start by establishing an $\ell^2$ version of Theorem \ref{TVcutoff}. The method of proof will be adapted in the next section to establish Theorem \ref{TVcutoff}.  Recall that $H_2(\mu)= - \log  \|\mu\|_2^2$, where $\|\mu\|_2^2= \sum_{z \in \Z} \mu(z)^2$.

\begin{theorem}\label{ell2cutoff} Assume GRH. Fix a probability measure $\mu$ on $\Z$ supported on a finite set of at least two elements, and consider the Markov chain $(\ref{chain})$. Fix $\eps,\kappa>0$ and $\theta>0$. For $X>0$, let $n$ be the integer part of $\frac{1}{H_2(\mu)}(X+\theta)$. Set $D_{\mu,\kappa,\eps} =8(1+\kappa)\eps^{-2}\|\mu\|_2^{-2}$. Then for all but an $\eps$-fraction of all primes $p$ in $[e^X,(1+\kappa)e^X]$ and all but an $\eps$-fraction of all $a \in \F_p^\times$, we have
$$p\|\mu_a^{(n)} -u \|_2^2 \leq D_{\mu,\kappa,\eps}e^{-\theta},$$
provided $X \ge 2\log \eps^{-1}+C(\theta \log \theta)^2 + C$, where $C$ is a constant depending only on $\kappa$ and $\mu$. 
\end{theorem}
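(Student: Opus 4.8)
The plan is to run the strategy outlined in the introduction: reduce the $\ell^2$ distance to a root-counting problem for random integer polynomials, average over primes, invoke the irreducibility theorem from the companion paper, and then apply Chebotarev/the prime ideal theorem under GRH to control $\E_p(N_p(P))$ uniformly in $P$. Concretely, recall from the proof of Theorem~\ref{loglog} that
$$\sum_{a \in \F_p} \|\mu_a^{(n)}\|_2^2 = \E(N_p(P)),$$
where $P=P_1-P_2$ is the difference of two independent random polynomials of degree $<n$ with i.i.d.\ coefficients of law $\mu$. First I would split the expectation according to whether $P=0$, $P\ne0$ is a product of cyclotomic factors times a constant (equivalently, $N_p(P)$ is abnormally large for structural reasons), or $P$ is ``generic.'' The event $P=0$ contributes $\P(P=0)\,p = \|\mu\|_2^{2n}\,p$, which by the choice $n=\lfloor (X+\theta)/H_2(\mu)\rfloor$ is of size $\asymp e^{-\theta}$ on $[e^X,(1+\kappa)e^X]$ — this is the genuine main term and the source of the cut-off. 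For $P\ne0$ one always has the crude bound $N_p(P)\le\deg P < n$, but that is far too weak after dividing by $p$; the point of averaging over primes is to replace it by something like $O(1)$ on average.

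The heart of the argument is therefore the averaged estimate
$$\frac{1}{\pi((1+\kappa)e^X)-\pi(e^X)}\sum_{e^X \le p \le (1+\kappa)e^X} \sum_{a\in\F_p}\|\mu_a^{(n)}\|_2^2 \;\le\; 1 + \|\mu\|_2^{2n}(1+\kappa)e^X + (\text{small}).$$
To get this I would condition on $P$ and bound $\E_p(N_p(P))$ for fixed nonzero $P$. Write $P=c\prod_j Q_j$ with $Q_j$ irreducible; then $N_p(P)\le \sum_j N_p(Q_j)$, and for each irreducible $Q_j$ the number of roots mod $p$ is, by the effective Chebotarev density theorem under GRH (Lagarias–Odlyzko style, in the form needed with explicit dependence on $\deg Q_j$ and $\operatorname{disc} Q_j$), equal to $1$ on average with an error controlled by $(\log(\text{disc}) + \deg\cdot\log X)/\sqrt{e^X}\cdot$(something). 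Since $\deg P < n = O(X)$ and the coefficients of $P$ are bounded (because $\mu$ has finite support), $\log|\operatorname{disc} P| = O(n\log n) = O(X\log X)$, so the error term is $O(e^{-cX}\cdot\mathrm{poly}(X))$ and is easily absorbed provided $X$ exceeds the stated threshold. This handles $P$ with few irreducible factors; to control the number of irreducible factors (so that $\sum_j 1$ does not blow up), I would invoke the irreducibility result of \cite{breuillard-varju-irred}, which says $P$ is irreducible with probability $1-o(1)$, together with a cruder tail bound (Dobrowolski + zero-counting, as in the sketch for Theorem~\ref{uncond}) ensuring the expected number of non-cyclotomic irreducible factors of $P$ is $O(1)$, and separately accounting for cyclotomic factors whose mod-$p$ roots are roots of unity and hence contribute $O(1)$ on average too. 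Assembling these: $\E_p\!\big(\sum_a \|\mu_a^{(n)}\|_2^2\big) \le 1 + \|\mu\|_2^{2n}(1+\kappa)e^X + o(1)$.

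Finally I would run a double Markov inequality. From the averaged bound, for all but an $\eps$-fraction of primes $p\in[e^X,(1+\kappa)e^X]$ one has $\sum_{a\in\F_p}\|\mu_a^{(n)}\|_2^2 \le \eps^{-1}\big(1 + \|\mu\|_2^{2n}(1+\kappa)e^X + o(1)\big)$; and for such $p$, for all but an $\eps$-fraction of $a\in\F_p$, $p\|\mu_a^{(n)}\|_2^2 \le \eps^{-2}\big(1 + \|\mu\|_2^{2n}(1+\kappa)e^X + o(1)\big)$. Using $p\|\mu_a^{(n)}-u\|_2^2 = p\|\mu_a^{(n)}\|_2^2 - 1$ and the fact that $\|\mu\|_2^{2n} \le \|\mu\|_2^{-2} e^{-H_2(\mu)\cdot (X+\theta)/H_2(\mu)} = \|\mu\|_2^{-2}e^{-X-\theta}$ from $n \ge (X+\theta)/H_2(\mu) - 1$, one gets $p\|\mu_a^{(n)}-u\|_2^2 \le \eps^{-2}\big(\|\mu\|_2^{-2}(1+\kappa)e^{-\theta} + o(1)\big)$; taking $X$ large enough that the $o(1)$ error is $\le \|\mu\|_2^{-2}(1+\kappa)e^{-\theta}$ (this is where the condition $X \ge 3\log\eps^{-1} + C(\theta\log\theta)^2 + C$ enters — the $3\log\eps^{-1}$ to beat the $\eps^{-2}$ losses and absorb the Markov slack, the $(\theta\log\theta)^2$ to dominate the polynomial-in-$X$ factors from the effective Chebotarev error once $X$ must exceed roughly $\theta^2$) yields $p\|\mu_a^{(n)}-u\|_2^2 \le D_{\mu,\kappa,\eps}e^{-\theta}$ with $D_{\mu,\kappa,\eps}=8(1+\kappa)\eps^{-2}\|\mu\|_2^{-2}$, the factor $8$ giving room for the constants.

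The main obstacle I expect is the uniform effective Chebotarev estimate: one needs $\E_p(N_p(Q))= 1 + O(\cdot)$ with an error that is explicit in $\deg Q$ and $\operatorname{disc} Q$ and still small when $\deg Q$ grows linearly in $\log p$ — this is exactly why GRH is invoked, and why the discriminant bound $\log|\operatorname{disc} P| = O(n\log n)$ (finiteness of $\Supp\mu$) is essential. The second, more delicate point is controlling the expected number of irreducible factors of $P$ well enough that $\sum_j N_p(Q_j)$ stays $O(1)$ on average; this is where the companion paper's irreducibility theorem does the real work, with Dobrowolski's lower bound mopping up the small-Mahler-measure (near-cyclotomic) contributions.
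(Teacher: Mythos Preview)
Your overall plan---root counting, effective Chebotarev under GRH, the companion paper's irreducibility theorem, and a double Markov inequality---is exactly the paper's approach. But there are two genuine gaps in the execution.

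First, the final Markov step is miscomputed. You apply Markov to $p\|\mu_a^{(n)}\|_2^2$, obtaining $p\|\mu_a^{(n)}\|_2^2 \le \eps^{-2}(1+\alpha)$ with $\alpha \asymp (1+\kappa)\|\mu\|_2^{-2}e^{-\theta}$, and only then subtract $1$. But that gives $p\|\mu_a^{(n)}-u\|_2^2 \le \eps^{-2}\alpha + (\eps^{-2}-1)$, and the term $\eps^{-2}-1$ does not vanish; it swamps the $e^{-\theta}$ you are after. The fix is simple but essential: apply Markov directly to the nonnegative quantity $Y = p\|\mu_a^{(n)}-u\|_2^2 = p\|\mu_a^{(n)}\|_2^2-1$. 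Its expectation over $a$ and $p$ is $\le \alpha + o(1)$, because the Chebotarev ``$1$'' cancels the ``$-1$'' \emph{before} Markov amplifies anything. This is exactly how the paper arranges it.

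Second, your handling of cyclotomic factors is not sufficient. The claim that they ``contribute $O(1)$ on average'' is unjustified: each irreducible cyclotomic factor contributes $1$ on average over $p$ by Dirichlet, but the companion theorem only gives $\deg\Phi = O_\mu(\sqrt{n})$, so the total cyclotomic contribution to $\E_p(N_p(P))$ could be of order $\sqrt{n}\asymp\sqrt{X}$, again swamping $e^{-\theta}$. The paper sidesteps this entirely by restricting from the outset to \emph{admissible} residues $a\in\F_p$, namely those that are not roots of any cyclotomic polynomial of degree $\le 3n$. There are only $O(n^3)$ non-admissible residues, so the restriction costs a harmless factor $1+O(n^3/p)$; and for admissible $a$ one has $N_p^{Ad}(P)\le N_p(\wt P)$, so the cyclotomic part never enters. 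With this device, the irreducibility theorem by itself suffices: either $\wt P$ is irreducible and Chebotarev under GRH gives $\E_p(N_p(\wt P)) = 1 + O(e^{-X/2}X^2 n^3)$, or $\wt P$ is reducible, an event of probability $\le \exp(-c\sqrt{n}/\log n)$ on which the crude bound $N_p\le n$ is enough. Your invocation of Dobrowolski to bound the number of irreducible factors is therefore unnecessary here (the paper reserves it for the unconditional Theorem~\ref{uncond}).
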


Note on the other hand that we have a universal lower bound$$p\|\mu_a^{(n)} -u \|_2^2 = p\|\mu_a^{(n)} \|_2^2 -1 \ge p\|\mu\|_2^{2n}-1 \ge e^{-\theta} -1$$ which is valid when $n=\lfloor \frac{1}{H_2(\mu)}(X+\theta) \rfloor$ for all $X>0$, $\theta \in \R$, and all $p \in [e^X,(1+\kappa)e^X]$.  In particular we see that the Markov chain is far from $\ell^2$-equidistribution if $\theta<0$, but becomes very close to equidistribution if $\theta>0$ is a fixed large number. Hence, for those primes $p$ and residues $a$ as in Theorem \ref{ell2cutoff} the $\ell^2$-mixing time satisfies $T_2(\delta)=\frac{1}{H_2(\mu)}(1+o_\delta(1))\log p$ as $p \to +\infty$, and hence that an $\ell^2$-cut-off takes place at $\frac{1}{H_2(\mu)}\log p$. The phase transition is very rapid and occurs within a small (bounded) window.

We will deduce Theorem \ref{ell2cutoff} from Proposition \ref{ell2cutoff-mix} below, which is the analogous result for the expectation of $p \|\mu_a^{(n)} -u \|_2^2$ (when averaging over $p$ and $a$) by a simple use of the Markov inequality. Given $n$ and $p$, we will say that a residue $a \in \F_p$ is \emph{admissible} if it is non-zero and not a root of a cyclotomic polynomial of degree at most $3n$.

\begin{proposition}\label{ell2cutoff-mix} Assume GRH. Fix a probability measure $\mu$ on $\Z$ supported on a finite set of at least two elements, and consider the Markov chain $(\ref{chain})$. Fix $\kappa>0$. For $n,X>0$,
$$ \E_{a,p}(p \|\mu_a^{(n)}\|_2^2 - p\|\mu\|_2^{2n})  \leq 1+  O(n^4X^2e^{-X/2}) + O(e^{-O(\sqrt{n}/\log n)}),$$
where $\E_{a,p}$ denotes the (weighted) average over all primes $p\in [e^X,(1+\kappa)e^X]$ and all admissible residues $a \in \F_p^\times$, and the implied constants depend only on $\kappa$ and $\mu$. 
\end{proposition}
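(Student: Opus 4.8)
The plan is to start from the exact identity $\sum_{a\in\F_p}\|\mu_a^{(n)}\|_2^2=\E(N_p(P))$ from Section \ref{multi}, where $P=P_1-P_2$ is the difference of two independent random polynomials of degree $\le n-1$ with i.i.d.\ coefficients of law $\mu$. Restricting to admissible $a$ discards at most $3n$ values, which is negligible compared to $\#\{p\in[e^X,(1+\kappa)e^X]\}\cdot e^{X/2}$, so it suffices to bound $\E_{a,p}(N_p(P))$ and compare it with $\P(P=0)\cdot\E_p(p)$. The first step is to condition on $P$ and split the expectation according to the structure of $P$: (i) $P=0$, contributing exactly $\P(P=0)\,\E_{a,p}(p)$, which after normalisation gives the $p\|\mu\|_2^{2n}$ term we subtract off (the small discrepancy between averaging $p$ over the interval and $e^X$ is absorbed into $O(\cdot)$); (ii) $P$ irreducible over $\Q$ of degree $d=\deg P$; (iii) $P\ne0$ reducible. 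For case (ii), the key arithmetic input is the effective Chebotarev / prime ideal theorem under GRH for $\zeta_K$, $K=\Q[x]/(P)$: for an irreducible $P$, $\E_p(N_p(P))=1+O(\text{error})$ where the error is controlled explicitly in terms of $\deg P$ and $\log|\operatorname{disc}(P)|$ — both of which are $O(n)$ and $O(n\log n)$ respectively since the coefficients of $P$ are bounded. This produces the $O(n^4X^2e^{-X/2})$ term (the $e^{-X/2}$ coming from $1/\sqrt{x}\sim e^{-X/2}$ in the GRH error, the polynomial-in-$n$-and-$X$ factors from the degree/discriminant/interval-length bookkeeping).

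The third step handles case (iii), which is where the companion paper \cite{breuillard-varju-irred} enters: with high probability $P$ is irreducible, and the probability that $P\ne0$ is reducible is $e^{-\Omega(\sqrt n/\log n)}$ (this is precisely the irreducibility theorem of \cite{breuillard-varju-irred}, which uses Konyagin's estimate, Lemma \ref{kon-lemma} here, together with Dobrowolski's bound). On this exceptional event one uses the crude bound $N_p(P)\le\deg P\le n$ together with the observation that $N_p(P)=p$ only when $P=0$; hence the contribution from reducible nonzero $P$ is at most $n\cdot\P(P\ne0\text{ reducible})=n\cdot e^{-\Omega(\sqrt n/\log n)}=e^{-\Omega(\sqrt n/\log n)}$, matching the last error term in the statement. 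Assembling the three contributions, dividing by $\#\{p\}$ (which is $\asymp_{\kappa} e^X/X$), and comparing with $p\|\mu\|_2^{2n}$ gives the claimed inequality; one has to be slightly careful that $\E_{a,p}$ is a weighted average (weighted by $1/\#\{$admissible $a\}$ or similar) so that the $P=0$ term is reproduced cleanly, but this is routine.

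The main obstacle is the effective error term in case (ii): one needs a version of the prime ideal theorem that is fully explicit and uniform in the number field $K$, with the error expressed in terms of $\deg P$ and the discriminant, and then one must feed in the bounds $\deg P\le n$, $|\operatorname{disc}(P)|\le(Cn)^{Cn}$ valid for polynomials with coefficients bounded in terms of $\supp\mu$. Tracking these through the GRH-conditional Chebotarev bound, and checking that after averaging the prime over the dyadic-type interval $[e^X,(1+\kappa)e^X]$ — rather than over $[2,x]$ — the error is still $O(n^4X^2e^{-X/2})$, is the delicate bookkeeping. A secondary point is that the reducibility estimate from \cite{breuillard-varju-irred} must be applicable to $P=P_1-P_2$ (a difference of two i.i.d.\ random polynomials) rather than a single random polynomial; since the $2n$ coefficients of the pair $(P_1,P_2)$ are i.i.d., and the irreducibility argument there only uses that the coefficient sequence has enough randomness and bounded support, this transfer should be immediate, but it must be stated. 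Everything else — the reduction to admissible $a$, Markov's inequality to pass from Proposition \ref{ell2cutoff-mix} to Theorem \ref{ell2cutoff}, and the choice $n=\lfloor(X+\theta)/H_2(\mu)\rfloor$ so that $p\|\mu\|_2^{2n}\asymp e^{-\theta}$ — is straightforward.
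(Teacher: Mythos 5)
Your overall strategy (exact identity for $\sum_a\|\mu_a^{(n)}\|_2^2$ as an expected root count, effective Chebotarev under GRH, irreducibility with high probability from \cite{breuillard-varju-irred}, then average over the prime window) is the right one and matches the paper, but there is a genuine error in your case split that would make the argument fail.

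You split according to whether $P=P_1-P_2$ is $0$, irreducible, or nonzero reducible, and claim $\P(P\neq0\text{ reducible})=e^{-\Omega(\sqrt n/\log n)}$, attributing this to the irreducibility theorem of \cite{breuillard-varju-irred}. That is not what that theorem says, and the claimed bound is false. The constant coefficient of $P$ is $b_0-b_0'$ with $b_0,b_0'$ i.i.d.\ of law $\mu$, so $\P(\text{const.\ coeff.}=0)=\|\mu\|_2^2>0$; hence $P$ has a monomial factor, and is in particular reducible, with probability bounded away from zero. Cyclotomic factors likewise occur with non-negligible probability. What the companion paper proves (Theorem~\ref{th:GRH}) is that, outside an event of probability $e^{-O(\sqrt n/\log n)}$, one can write $P=\Phi\wt P$ with $\wt P$ irreducible and $\Phi$ a product of cyclotomic polynomials and a monomial. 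So the correct trichotomy is $P=0$ / $\wt P$ irreducible / $\wt P$ reducible, where $\wt P$ is the quotient of $P$ by all monomial and cyclotomic factors. With your split, the ``reducible'' bucket has probability $\gtrsim\|\mu\|_2^2$, and the crude bound $N_p(P)\le n$ then contributes $\gtrsim n$ to the average, which is far too large: the statement only allows a $+1$, not a $+O(n)$.

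This is also why admissibility is not merely a counting convenience, as you treat it. Restricting to admissible $a$ (nonzero, not a root of a small cyclotomic polynomial) is precisely what gives $N_p^{\mathrm{Ad}}(P)\le N_p(\wt P)$, i.e.\ it erases the contribution of the cyclotomic/monomial part $\Phi$ so that on the good event the Chebotarev count sees only the single irreducible factor $\wt P$. Dropping that step, the roots of $\Phi$ (which can be as many as $\deg\Phi$ per prime) pollute the average even when $\wt P$ is irreducible. (A minor point: the number of non-admissible residues is $O(n^3)$, not ``at most $3n$'' --- cyclotomic polynomials of degree $\le 3n$ have index up to $O(n\log\log n)$, hence $O((n\log\log n)^2)$ roots --- though this only affects the lower-order error term.)

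The rest of your outline --- the GRH effective prime ideal theorem with explicit dependence on $\deg$ and $\log|\Delta|$, the bounds $\deg\wt P\le n$ and $|\Delta_{\wt P}|\le|\Delta_P|\le(C_\mu n)^{2n}$, the bookkeeping over the window $[e^X,(1+\kappa)e^X]$, and the remark that $P=P_1-P_2$ has $n$ i.i.d.\ coefficients of law $\nu=\mu*\check\mu$ so the irreducibility theorem applies directly --- is correct and matches the paper's argument once the decomposition and the role of admissibility are fixed.
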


By \emph{weighted average over primes}, we mean that we attribute the weight $\log p$ to each prime $p$ when counting.

\begin{proof}[Proof of Theorem \ref{ell2cutoff}] Let $Y=p \|\mu_a^{(n)}-u\|_2^2 = p\|\mu_a^{(n)}\|_2^2-1 $. By Proposition \ref{ell2cutoff-mix}, we have $$ \E_{a,p}(Y)\leq \E_{a,p}(p\|\mu\|_2^{2n}) +  O_\mu(\theta^4X^2 e^{-X/2}) + O(e^{-O_\mu(\sqrt{ X}/\log X)}).$$  Note that $\E_{a,p}(p\|\mu\|_2^{2n}) \leq  \alpha$, where $\alpha:= (1+\kappa)e^{-\theta}/\|\mu\|_2^2$.  Therefore $\E_{a,p}(Y) \leq 2\alpha$ provided $X\geq C_{\kappa,\mu}(1+(\theta \log \theta)^2)$.

We write  $\E_{a,p}(Y)=\E_{p}(\E_{a}(Y|p))$ and by the Markov inequality $$\P_p(\E_{a}(Y|p) \ge 4\alpha/\eps) \leq \eps/2.$$ Let $B$ be the set of primes $p\in [e^X,(1+\kappa)e^X]$ and $A$ the subset of those such that $\E_{a}(Y|p) > 4\alpha/\eps$. Then $$|A| X \leq \sum_{p\in A} \log p\leq\frac{\eps}{2}\sum_{p\in B} \log p \leq \frac{\eps}{2}(X+\log(1+\kappa))|B| \leq \eps X|B|,$$ provided $X\ge \log(1+\kappa)$. So $|A|/|B|\leq \eps$ . 

Now for $p \in B\setminus{A}$,  by the Markov inequality again, $\P_a(Y>8\alpha/\eps^2) \leq \eps/2$. And we deduce that for all but an $\eps$-fraction of primes $p \in [e^X,(1+\kappa)e^X]$, for all but an $\eps/2$-fraction of admissible residues $a \in \F_p$, we have $Y \leq 8\alpha/\eps^2$. On the other hand, there are at most $O(n^3)$ non-admissible residues. So again as soon as $X$ is large enough (say $X>2\log(\theta \eps^{-1}) +O(1)$, so that $n^3 \ll \eps e^X$), this represents at most an $\eps/2$-fraction of all residues modulo $p$.  This yields the upper bound in Theorem \ref{ell2cutoff}.
\end{proof}

We now turn to the proof of Proposition \ref{ell2cutoff-mix}. It will follow easily from the main theorem of \cite{breuillard-varju-irred}, whose proof assumed the validity of the Riemann hypothesis for Dedekind zeta functions of number fields, which we recall now.

\begin{theorem}[\cite{breuillard-varju-irred}*{Theorem 2}]\label{th:GRH} Assume GRH. Fix a probability measure $\nu$ on $\Z$ supported on a finite set of at least two elements. Let $\{A_i\}_{0\leq i\leq n}$ be independent random variables with common law $\nu$ and set $P=A_n x^n+\ldots+A_1x+A_0\in\Z[x]$. Then, with probability at least $1-\exp(-O_\nu(n^{1/2}/\log n))$, $$P=\Phi \wt P,$$ where  $\wt P$ is irreducible in $\Q[x]$,  $\deg \Phi =O_\nu(\sqrt{n})$ and $\Phi$ is a product of cyclotomic polynomials and a monomial $x^m$ for some $m\in\Z_{\ge 0}$.
\end{theorem}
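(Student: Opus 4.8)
The plan is to first strip off the ``expected'' factors and then to pin down the number of genuine (non-cyclotomic) irreducible factors by counting roots modulo primes. So I would begin by writing $m$ for the multiplicity of $0$ as a root of $P$ and $\Phi_0$ for the product of the cyclotomic factors of $P$, setting $\Phi=x^m\Phi_0$ and $\wt P=P/\Phi$. Elementary estimates then show that $\deg\Phi=O_\nu(\sqrt n)$ except with probability $\le e^{-c_\nu\sqrt n}$: the probability that a fixed $\Phi_d$ divides $P$ decays polynomially in $n$ (essentially like $(n/d)^{-\phi(d)/2}$ by a local central limit theorem, with only the finitely many $d$ having $\phi(d)$ small contributing non-negligibly), $m$ is dominated by a geometric variable, and having a cyclotomic divisor of degree $\ge\sqrt n$ is crushed by a union bound against $(n/\deg)^{-\deg/2}$-type estimates. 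This reduces the theorem to showing that the number $r(P)$ of distinct non-cyclotomic irreducible factors of $P$ equals $1$ with probability $1-\exp(-O_\nu(\sqrt n/\log n))$; since $r(P)\ge 1$ always and is integer-valued, it suffices to prove $\E(r(P))\le 1+\exp(-O_\nu(\sqrt n/\log n))$ and invoke Markov's inequality.

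To estimate $\E(r(P))$ I would count roots modulo primes $p$ in a window $[e^X,(1+\kappa)e^X]$, with $X$ a large multiple of $(\log n)^2$. For a ``good'' $p$ one has $N_p(\wt P)=\sum_{Q\mid\wt P\text{ irred}}\#\{\fp\mid p:f(\fp\mid p)=1\}$, and the effective prime ideal theorem for the fields $\Q(\alpha_Q)$, averaged over $p$, turns this into $\E_p N_p(\wt P)=r(P)+\sum_Q\mathrm{err}_Q$ up to ramified corrections. Under GRH the error $\mathrm{err}_Q$ is $O(e^{-X/2}\,\mathrm{poly}(n))$ once the discriminant of $\Q(\alpha_Q)$ is controlled; and this is controlled \emph{deterministically and uniformly over all factors}, because any divisor $Q$ of $P$ has Mahler measure $M(Q)\le M(P)\le\|P\|_2\le(\max_{z\in\Supp\nu}|z|)\sqrt{n+1}$, hence coefficients of size $\le 2^nM(P)$ by Gelfond's inequality, hence $\log|\mathrm{disc}(\Q(\alpha_Q))|=O_\nu(n^2)$. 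As $P$ has at most $n$ irreducible factors, $\sum_Q|\mathrm{err}_Q|=O(n\,e^{-X/2}\mathrm{poly}(n))\le\exp(-O_\nu(\sqrt n/\log n))$ for $X$ as above. For the main term $\E_p\E_P N_p(\wt P)$ I would argue as in Section~\ref{multi} and the proof of Theorem~\ref{konthm}: writing $\P(P(a)\equiv 0\bmod p)=\tfrac1p+\tfrac1p\sum_{\xi\neq0}|\widehat{\mu_a^{(n)}}(\xi)|^2$ with $\widehat{\mu_a^{(n)}}(\xi)=\prod_{i=0}^{n-1}\phi(\xi a^i)$ and $|\phi(t)|\le\exp(-c\,[tg/p]^2)$ for a fixed nonzero $g\in\Supp\nu-\Supp\nu$, and iterating Konyagin's Lemma~\ref{kon-lemma} over consecutive windows, one gets $|\widehat{\mu_a^{(n)}}(\xi)|^2\le\exp(-cn/(\log p(\log\log p)^5))$ whenever $a$ has multiplicative order $\gg\log p\log\log p$; summing over such $a$ and bounding the remaining (few) residues crudely yields $\E_P N_p(P)=1+O(\mathrm{poly}(\log p)+p\,e^{-cn/(\log p(\log\log p)^5)})$, so that $\E_p\E_P N_p(\wt P)$ is within $\exp(-O_\nu(\sqrt n/\log n))$ of $1$.

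The low-degree factors, where $\deg Q=O_\nu((\log n)^{O(1)})$ and the prime-ideal error $\mathrm{err}_Q$ is not yet small enough, I would handle by a direct union bound instead: such a $Q$ has, by the Mahler-measure bound above, coefficients of size $\le 2^{\deg Q}M(P)$, leaving only $n^{O_\nu(\deg Q)}$ candidates; $\P(Q\mid P)\le M(Q)^{-cn/\deg Q}$ by an anti-concentration estimate (condition on all but the top $O(\deg Q)$ coefficients of $P$ and use that the resulting linear system, being governed by the house of $\alpha_Q$, which is $\ge M(Q)^{1/\deg Q}>1$, is strongly non-degenerate); and Dobrowolski's bound $M(Q)\ge 1+c(\log\log\deg Q/\log\deg Q)^3$ for non-cyclotomic $Q$ \cite{Dob-Lehmer} makes $n^{O_\nu(\deg Q)}\cdot M(Q)^{-cn/\deg Q}\le e^{-\sqrt n}$ across this range.

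The crux, and the step I expect to be the main obstacle, is precisely the uniformity: the possible irreducible factors of $P$ are a priori enormously numerous and of arbitrarily large degree and discriminant, so the prime-ideal-theorem (equivalently, Chebotarev) error terms must be kept small for all of them simultaneously — this is exactly where the Riemann hypothesis for Dedekind zeta functions becomes indispensable, and why a \emph{deterministic}, polynomial-in-$n$ bound on $\log|\mathrm{disc}(\Q(\alpha_Q))|$ — supplied by the boundedness of the coefficients of $P$ via the Mahler measure — is essential rather than merely convenient. A secondary difficulty, already visible in the present paper, is the treatment of the ``resonant'' residues $a$ of small multiplicative order, for which Konyagin's estimate degrades: these have to be excised (there are few of them — the notion of \emph{admissible} residue plays this role) and their total contribution estimated by hand. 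Dropping GRH, one salvages only the weaker Theorems~\ref{loglog} and~\ref{uncond}, using zero-counting for $\zeta_K$ near $s=1$ together with Dobrowolski's bound to control the number of irreducible factors of $P$ with an inferior error term.
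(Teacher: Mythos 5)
This theorem is not proven in the present paper; it is imported verbatim from the companion paper \cite{breuillard-varju-irred} (Theorem~2 there), and the only information the present manuscript gives about its proof is the one‑paragraph discussion in the Strategy section and the remark after \eqref{primeav}. So there is no ``paper's own proof'' here to compare against line by line; what follows assesses your sketch against those indications and against what the statement itself requires.

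Your architecture is the right one and does match the companion‑paper argument as described here: strip the monomial and cyclotomic part, reduce to showing $\E(r(P))\le 1+\text{tiny}$ where $r$ is the number of non‑cyclotomic irreducible factors, identify $r(\wt P)$ with $\E_p N_p(\wt P)$ via the prime ideal theorem under GRH (with the key point — which you correctly single out — that the discriminants of \emph{all} possible factors are controlled deterministically and uniformly via the Mahler/Gelfond coefficient bound), estimate $\E_P N_p$ by the Konyagin equidistribution bound, and dispose of bounded‑degree non‑cyclotomic factors by a union bound together with Dobrowolski. That is exactly the mechanism the paper alludes to.

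However, two quantitative steps in your sketch do not deliver the stated error rate $\exp(-O_\nu(\sqrt n/\log n))$.

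\emph{Choice of the prime window.} You take $X$ a large multiple of $(\log n)^2$. The prime‑ideal error term per factor under GRH is $O(e^{-X/2}X^2\log|\Delta_Q|)=e^{-X/2}\mathrm{poly}(n)$, so even after summing over at most $n$ factors the total GRH error is of order $\exp(-\Theta((\log n)^2))$. Since $(\log n)^3=o(\sqrt n)$, this is \emph{much larger} than $\exp(-c\sqrt n/\log n)$, so the claimed failure probability cannot come out of this choice. One must instead take $X$ of order $\sqrt n/\mathrm{polylog}(n)$, and then the balance between $e^{-X/2}$ and the Konyagin side $p\exp\bigl(-cn/(X(\log X)^5)\bigr)$ becomes genuinely delicate: with the version of Konyagin's lemma quoted in this paper (Lemma~\ref{kon-lemma}, with $(\log\log p)^5$ losses) the optimization gives a rate closer to $\exp\bigl(-c\sqrt n/(\log n)^{5/2}\bigr)$, and reaching $\sqrt n/\log n$ requires the sharper/more general equidistribution statement (Proposition~24 of \cite{breuillard-varju-irred}) rather than what is available here. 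So this step is a real gap, not just a cosmetic constant.

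\emph{Non‑admissible residues.} You propose to ``bound the remaining (few) residues crudely,'' which, taken literally (bounding each by $1$), contributes $\mathrm{poly}(\log p)$ to $\E_P N_p$ and hence gives only $\E(r(P))\le 1+\mathrm{polylog}(n)$ — useless. The small‑order residues have to be treated either by excision (working with admissible roots only and paying a multiplicative $p/|Ad|=1+O(n^3/p)$, as in Section~\ref{ell2} of this paper), or by a genuine local‑limit/anti‑concentration bound showing that, after the cyclotomic factors are removed, a root‑of‑unity residue of order $d$ is a root of $\wt P\bmod p$ with probability at most of size $(d/n)^{\phi(d)/2}$ or better. Even that naive local‑limit bound only gives $O(1/\sqrt n)$ total, which is again far from $\exp(-c\sqrt n/\log n)$; getting below that requires exploiting the structure of $\wt P$ (no cyclotomic factors) and the huge size of $p$ more carefully. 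This is precisely where the argument becomes technical, and a sketch cannot just wave at it — this is the second genuine gap.

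None of this suggests your approach is wrong; it is in fact the intended one. But the stated rate is stronger than what your parameters and crude bounds produce, and both the choice of $X$ and the treatment of the small‑order residues must be reworked to close the argument.
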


Below we denote by $P=P_1-P_2$  the difference between two independent copies $P_1,P_2$ of a polynomial $a_{n-1} x^{n-1}+\ldots+a_1x+a_0$, where the $a_i$'s are i.i.d random variables chosen according to the law $\mu$. We also denote by $\wt P$ the quotient of $P$ by all its monomial and cyclotomic divisors. We will write $Ad$  for the set of admissible residues in $\F_p$. Also $N^{Ad}_p(P)$ will be the number of admissible roots of $P$.  We may write
\begin{align}\label{eq:l2sum}\sum_{a \in Ad} \|\mu_a^{(n)}\|_2^2 &= \sum_{a \in Ad} \P(P_1(a)=P_2(a))\\ \label{eq:l2sum20}&= |Ad| \P(P_1=P_2) + \E( 1_{\wt{P} \textnormal{ irred.}}N^{Ad}_p(P))+ O(n) \P(\wt{P} \textnormal{ reducible}).
\end{align}
The $O(n)$ on the right hand side comes from the trivial bound on the number of roots of $P$ in $\F_p$.

We denote by $\nu$ the law of $x-y$, where $x$ and $y$ are independent random variables with law $\mu$. This way, the coefficients of $P=P_1-P_2$ are i.i.d. and distributed according to $\nu$. Note that admissible roots of $P$ in $\F_p$ are roots of $\wt P$ in $\F_p$. Hence $$N^{Ad}_p(P) \leq N_p(\wt P),$$ where $N_p(\wt P)$ is the number of distinct roots of $\wt P$ in $\F_p$. Moreover, we observe that $\P(P_1=P_2)=\P(P=0)=\nu(0)^n=\|\mu\|_2^{2n}$. The $k$-th cyclotomic polynomial is of degree $\phi(k)$, and if $\phi(k) \leq 3n$, then $k =O( n \log \log n)$. In particular there are at most $O(n \log \log n)^2 =O(n^3)$ non-admissible roots in $\F_p$, so $p/|Ad| \leq 1+ O(n^3/p)$. In view of these observations, multiplying both sides of \eqref{eq:l2sum20} by $p/|Ad|$ yields
$$\E_{a}(p\|\mu_a^{(n)}\|_2^2) \leq p\|\mu\|_2^{2n} + \E(1_{\wt P \textnormal{ irred.}} N_p(\wt P)) + O(n^4/p) + O(n^4) \P(\wt{P} \textnormal{ reducible})$$
with absolute implied constants, where  $\E_{a}$ denotes the average over admissible residues.

Now using Theorem \ref{th:GRH}, we write
\begin{equation}\label{mas}\E_{a}(p\|\mu_a^{(n)}\|_2^2) \leq p\|\mu\|_2^{2n} + \E(1_{\wt P \textnormal{ irred.}} N_p(\wt P)) + O(n^4e^{-X})+O(\exp(-O(\frac{n^{1/2}}{\log n}))),\end{equation}
where the implied constants depend only on $\mu$.

We now perform the average over primes $p \in [e^X,(1+\kappa)e^X]$. 
The main point is the following well-known fact:  \emph{when $p$ varies among primes, the average number of distinct roots lying in $\F_p$ of a given non-constant polynomial $Q\in \Z[x]$ converges to the number of irreducible factors of $Q$.} This is an instance of the prime ideal theorem for number fields and effective estimates on the error term can be given in terms of the height and degree of the polynomial and the location of zeroes of the Dedekind zeta functions of the associated number fields. This observation was at the basis of the proof of Theorem \ref{th:GRH} which we gave in our previous article \cite{breuillard-varju-irred}, and it will be used here as well. 

If $Q\in \Z[x]$ is irreducible over $\Q$ and $p$ is a prime not dividing the discriminant of $Q$, then the number $N_p(Q)$ of roots of $Q$ in $\F_p$ coincides with the number $A_p(Q)$ of prime ideals of norm $p$ in the number field $\Q[x]/(Q)$ (see e.g. \cite{Coh-computational-NT}*{Theorem 4.8.13}). Moreover, under GRH, we have the following for all $X>0$
$$\sum_{p \in [e^X,(1+\kappa)e^X]} A_p(Q)\log (p)=\kappa e^X+O_\kappa(e^{X/2}X^2\log|\Delta_Q|),$$
where $\Delta_Q$ is the discriminant of $Q$. This is a classical result in analytic number theory originally due to Landau. For a proof in this exact form (when $\kappa=1$), see \cite{breuillard-varju-irred}*{Prop. 9}.

We may replace $A_p(Q)$ by $N_p(Q)$ in the above sum without seriously worsening the error term, because at most $O(\log |\Delta_Q|)$ primes can ever divide $\Delta_Q$, and both $A_p(Q)$ and $N_p(Q)$ are bounded by $d=\deg(Q)$. This implies that
\begin{equation}\label{pr:prime-sum-GRH}
\sum_{p \in [e^X,(1+\kappa)e^X]} N_p(Q)\log (p)=\kappa e^X+O_\kappa(e^{X/2}X^2d\log|\Delta_Q|).
\end{equation}

We can apply this estimate to $Q=x$ to count ordinary primes and also to $Q=\wt P$. Note that $\Delta_{\wt P}$ divides $\Delta_P$ and that $|\Delta_P| \leq (C_\mu n)^{2n}$, where $C_\mu$ is an upper bound on the absolute value of coefficients of $P$, i.e. $C_\mu=\max\{|z|, \mu(z)\neq 0\}$. Hence if $\wt P$ is irreducible, $(\ref{pr:prime-sum-GRH})$ yields
\begin{equation}\label{primeav}
 \E_p(N_p(\wt P))= 1+O_{\kappa,\mu}(e^{-X/2}X^2n^3),
\end{equation}
where we used the weighted average over primes in $[e^X,(1+\kappa)e^X]$.

Combining this with $(\ref{mas})$, we get
$$\E_{a,p}(p\|\mu_a^{(n)}\|_2^2) \leq \E_{p}(p\|\mu\|_2^{2n}) + 1 + O_{\kappa,\mu}(e^{-X/2}X^2n^3 +n^4e^{-X})+O_{\mu}(e^{-O_{\mu}(\frac{n^{1/2}}{\log n})})$$
and this yields the desired error term in Proposition \ref{ell2cutoff-mix}.

\section{Proof of cut-off in total variation}\label{total}
We now pass to the proof of Theorem \ref{TVcutoff}. The main idea is to use the Cauchy-Schwarz inequality to relate the total variation norm to the $\ell^2$ norm, and then to modify the proof of the $\ell^2$-cut-off given in the previous section by discarding rare events from the sample space. This is performed via the classical Chernoff-Hoeffding inequality, which says that if $X_1,\ldots,X_n$ are independent zero mean real random variables taking values in an interval $[a,b]$, then for all $t \ge 0$  $$\P(\frac{1}{n}|X_1+\ldots+X_n|\ge t)  \leq 2 \exp(-\frac{2nt^2}{(b-a)^2}).$$

We denote by $\mu^{(n)}$ the law of a random polynomial $P$ as in $(\ref{rpo})$ with independent integer random coordinates $b_i$'s with law $\mu$. Then setting $X_i=-\log \mu(b_i)- H(\mu)$ gives that for any $n,t>0$ the subset $A_n$ of all $P \in \Z[x]$ such that
\begin{equation}\label{mean}|\frac{1}{n}\log \mu^{(n)}(P) + H(\mu)|\leq t\end{equation}
has $\mu^{(n)}$-probability at least  $1-2e^{- 2t^2 n/C_\mu^2}$, where
$$C_\mu:=\max\Big(1,\log \sup_{x \in \Supp(\mu)} \mu(x) - \log \inf_{x \in \Supp(\mu)} \mu(x)\Big).$$

Let $\mu_{|A_n}^{(n)}$ be the measure (possibly of mass $<1$) obtained by restricting $\mu^{(n)}$ to the event $A_n$. Let $\pi_a:\Z[x] \to \F_p$ be the evaluation map sending $P$ to $P(a)$, so that $\mu_a^{(n)}=\pi_a(\mu^{(n)})$. We may write
\begin{align}\|\mu_a^{(n)} - u\|_1 \leq& \|\pi_a(\mu_{|A_n}^{(n)}) - u\|_1 +\|\pi_a(\mu_{|A_n}^{(n)} - \mu^{(n)})\|_1\nonumber\\ \leq& \|\pi_a(\mu_{|A_n}^{(n)}) - u\|_1 +\|\mu_{|A_n}^{(n)} - \mu^{(n)}\|_1\nonumber\\ =& \|\pi_a(\mu_{|A_n}^{(n)}) - u\|_1 + \mu^{(n)}(A_n^c).  \label{1212}\end{align}
On the other hand, by Cauchy-Schwarz,
\begin{equation}\label{CS} \|\pi_a(\mu_{|A_n}^{(n)}) - u\|_1^2 \leq p  \|\pi_a(\mu_{|A_n}^{(n)}) - u\|_2^2. \end{equation} 
We write
\begin{align} p\|\pi_a(\mu_{|A_n}^{(n)}) - u\|_2^2 &= p\|\pi_a(\mu_{|A_n}^{(n)})\|_2^2 -2\mu^{(n)}(A_n) +1\nonumber\\ &=  p\|\pi_a(\mu_{|A_n}^{(n)})\|_2^2 -1 +2\mu^{(n)}(A_n^c).\label{eqqqq}
\end{align}
Combining \eqref{1212}, \eqref{CS} and \eqref{eqqqq}, we get
\begin{equation}\label{eq:final}
\|\mu_a^{(n)}-u\|_1^2\le p\|\pi_a(\mu_{|A_n}^{(n)})\|_2^2 -1 +5\mu^{(n)}(A_n^c).
\end{equation}

Similarly to \eqref{eq:l2sum}, we write
\begin{equation}\sum_{a \in Ad} \|\pi_a(\mu_{|A_n}^{(n)})\|_2^2 =  \E_{|A_n}^{(n)}(N_p^{Ad}(P)),\label{eq:l2sum2}\end{equation}
where, as before, we write $P=P_1-P_2$ for the difference of two independent copies $P_1$, $P_2$ of a random polynomial with law $\mu^{(n)}$, and $N_p^{Ad}(P)$ is the number of admissible roots of $P$ in $\F_p$. Also, $\E_{|A_n}^{(n)}(X)$ is a shorthand for $\E^{(n)}(X\cdot 1_{A_n\times A_n})$ and $\E^{(n)}$ is the expectation for the probability law $\mu^{(n)}\times\mu^{(n)}$. Writing $\wt P$ again for the quotient of $P$ by all its monomial  and cyclotomic factors, we have
$$ \E_{|A_n}^{(n)}(N_p^{Ad}(P)) \leq |Ad| \P^{(n)}_{|A_n}(P=0)  + \E^{(n)}( 1_{\wt{P} \textnormal{ irred.}}N^{Ad}_p(P))+ O(n) \P^{(n)}(\wt{P} \textnormal{ reducible}).$$
We plug this into \eqref{eq:l2sum2}, multiply both sides by $p/|Ad| \leq 1+ O(n^3e^{-X})$, and apply Theorem \ref{th:GRH} as before,
$$\E_{a}(p \|\pi_a(\mu_{|A_n}^{(n)}) \|_2^2) \leq p\|\mu_{|A_n}^{(n)}\|^2_2 + \E^{(n)}(1_{\wt P \textnormal{ irred.}} N_p(\wt P)) + O(n^4e^{-X})+ O(\exp(-O(\frac{n^{1/2}}{\log n}))),$$
where $\E_a$ denotes the average over admissible residues, and the implied constants depend only on $\mu$.
We average over primes as before, and using $(\ref{primeav})$, we get
\begin{equation}\label{mas2}\E_{a,p}(p \|\pi_a(\mu_{|A_n}^{(n)}) \|_2^2) \leq \E_p(p\|\mu_{|A_n}^{(n)}\|^2_2) + 1 +O_{\kappa,\mu}(e^{-X/2}X^2n^4)+O_{\mu}(e^{-O_{\mu}(\frac{n^{1/2}}{\log n})}). \end{equation}

Now recall that $n$ is the integer part of $\frac{1}{H(\mu)}(X+\theta \sqrt{X})$. We combine \eqref{eq:final} and \eqref{mas2} to write
\begin{equation}\label{last}\E_{a,p}(\|\mu_a^{(n)} - u\|_1^2)\leq  \E_p(p\|\mu_{|A_n}^{(n)}\|^2_2)  + 5 \mu^{(n)}(A_n^c) + O_{\kappa,\mu}(\theta^4 X^2 e^{-X/2})+O_\mu(e^{-O_\mu(\frac{\sqrt{X}}{\log X})}).\end{equation}
Using $(\ref{mean})$,
\begin{align*}p\|\mu_{|A_n}^{(n)}\|^2_2&=\sum_{\omega \in A_n} p\mu^{(n)}(\omega)^2 \leq p e^{-(H(\mu)-t)n}\\ &\leq (1+\kappa)e^X e^{-(H(\mu)-t)(\frac{1}{H(\mu)}(X+\theta\sqrt{X})-1)}\end{align*}
We set $t=\frac{\theta H(\mu)}{2\sqrt{X}}$ and obtain
$$p\|\mu_{|A_n}^{(n)}\|^2_2 =O_{\kappa,\mu}(e^{-\frac{\theta}{2}\sqrt{X}} e^{\frac{\theta^2}{2}}),$$
while
$$ \mu^{(n)}(A_n^c) \leq 2e^{-2t^2 n/C_\mu^2} \leq 2 e^{-\frac{H(\mu)}{4C_\mu^2} \theta^2 }.$$
Combining the last two estimates with $(\ref{last})$, we obtain
\begin{equation}\label{lastt}\E_{a,p}(\|\mu_a^{(n)} - u\|_1^2)\leq O_{\kappa,\mu}(e^{\frac{\theta^2}{2}}e^{-\frac{\theta}{2}\sqrt{X}})+ 10e^{-\frac{H(\mu)}{4C_\mu^2} \theta^2 } +O_{\kappa,\mu}(\theta^4X^2e^{-X/2})+O_\mu(e^{-O_\mu(\sqrt{X}/\log X)}).\end{equation}

Writing $\a=10e^{-\frac{H(\mu)}{4C_\mu^2}\theta^2 }$, we get
\[
\E_{a,p}(\|\mu_a^{(n)} - u\|_1^2)\leq2\a,
\]
provided $X>C(\theta^2\log\theta)^2+C$ for a suitably large $C$ (dependent on $\mu,\kappa$ only). Now applying the Markov inequality twice in exactly the same way as in the proof of Theorem \ref{ell2cutoff}, we find that for all but an $\e$ proportion of the primes $p$ in $[e^X,(1+\kappa)e^X]$,
and for all but an $\e$ proportion of $a\in\F_p$, we have
\[
\|\mu_a^{(n)} - u\|_1^2\leq 8\a/\e^2.
\]
Taking square roots and dividing by 2, we get the desired estimate. This ends the proof of Theorem \ref{TVcutoff}.

\begin{proof}[Proof of Corollary \ref{cor4}]
We begin with the proof of $(\ref{ent-b})$. Set $t=H(\mu)\theta/\sqrt{\log p}$ and consider $(\ref{mean})$. We have $$|\pi_a(A_n)|\min_{\omega \in A_n} \mu^{(n)}(\omega) \leq \mu^{(n)}(A_n)\leq 1.$$ Hence $$|\pi_a(A_n)|\leq e^{n(H(\mu)+t)}.$$

Since $n = \lfloor(\log p-\theta\sqrt{\log p})/H(\mu)\rfloor$, this means that most of the walk is confined to a small part of $\F_p$, and hence
\begin{align*}\|\mu_a^{(n)}-u\|_{TV} &\ge |\mu_a^{(n)}(\pi_a(A_n)^c)-u(\pi_a(A_n)^c)|\\ &\ge \frac{|\pi_a(A_n)^c|}{p} - \mu_a^{(n)}(\pi_a(A_n)^c)\\ &\ge 1- \frac{1}{p}e^{n(H(\mu)+t)}-\mu^{(n)}(A_n^c)\\ &\ge1- \frac{1}{p}e^{(\log p-\theta\sqrt{\log p})H(\mu)^{-1}(H(\mu)+H(\mu)\theta/\sqrt{\log p})}-2e^{-2t^2n/C_\mu^2}\\ &\ge1- \frac{1}{p}e^{\log p-\theta^2}-2e^{-2H(\mu)^2\theta^2(\log p)^{-1}\log p/2H(\mu)C_\mu^2}\\ &\ge 1- e^{-\theta^2} - 2e^{-\frac{H(\mu)}{C_\mu^2}\theta^2}.
\end{align*}
This establishes $(\ref{ent-b})$.

We now move on to the proof of Corollary \ref{cor4}.  Set $\theta=D_\mu \sqrt{\log \eps^{-1}}$ for some constant $D_\mu>0$ to be determined later. First, the lower bound for $T(\delta)$ follows easily from $(\ref{ent-b})$. Indeed, if $n \leq (\log p  - \theta \sqrt{\log p})/H(\mu)$, then  $(\ref{ent-b})$ implies that $$\|\mu_a^{(n)}-u\|_{TV} \ge 1- e^{-O_\mu(\theta^2)} \ge 1-\eps \ge \delta$$
provided $D_\mu$ is large enough.

Now for the upper bound, set $\kappa=1$ and, for every integer $m\ge 1$, $X=\log 2^m$ in Theorem \ref{TVcutoff}.  Let  $E_m$ be the set of exceptional primes given by Theorem \ref{TVcutoff} in the interval $[2^m,2^{m+1}]$. If $p\notin E_m$ but $\log 2^m \ge  \theta^5$, then Theorem \ref{TVcutoff} yields:
 $$\|\mu_a^{(n)}-u\|_{TV} \leq 8\eps^{-1}e^{-\frac{H(\mu)}{16C_\mu^2}\theta^2} \leq \eps \leq \delta$$
provided $D_\mu$ is large enough and $n \ge (\log p  + \theta \sqrt{\log p})/H(\mu)$. This establishes the corollary with  $E:=\cup_{m \ge \theta^5} E_m$. Note that $E$ has density at most $\eps$ among all primes, since $E_m$ has density $\leq \eps$ among primes in $[2^m,2^{m+1}]$ for all $m$.
\end{proof}

We also state another variant of Corollary \ref{cor4}. Its proof is entirely similar, but in the application of Theorem \ref{TVcutoff}, one needs to take $\theta^2= \sqrt{X}/(D_\mu \log X)$ for a suitable constant $D_\mu$ and $\eps=\exp(H(\mu)\theta^2/32C_\mu^2)$. We leave the details to the reader.

\begin{corollary}\label{cor5} Assume GRH. There is an exceptional set of primes $F$ of density $0$, such  that for all primes $p\notin F$, for every $\eta \in (0,1)$  and for all but an $e^{-O_\mu(\sqrt{\log p}/\log \log p)}$-fraction of all $a \in \F_p^\times$ we have:
$$|T(\delta) - \frac{1}{H(\mu)}\log p| \leq \frac{\eta}{H(\mu)} \log p,$$
for all $\delta \in (\eta,1-\eta)$, provided $\log p > C(\mu,\eta)$.

Moreover, the density of the exceptional primes $F$ can be estimated as
\[|\{p \in F \cap [1,e^X]\}| = O_\mu(e^{-O_\mu(\sqrt{X}/\log X)}) \frac{e^X}{X}.\]
\end{corollary}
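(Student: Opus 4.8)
The plan is to run the argument of Corollary \ref{cor4} essentially verbatim, but with the parameters $\theta$ and $\eps$ allowed to vary slowly with $X$ rather than being held fixed. Recall that in Corollary \ref{cor4} we took $\theta = D_\mu\sqrt{\log\eps^{-1}}$, which couples $\theta$ and $\eps$ so that the total-variation bound $5\eps^{-1}e^{-H(\mu)\theta^2/16C_\mu^2}$ from Theorem \ref{TVcutoff} collapses to something like $\eps$; here instead I would choose $\eps = \exp(H(\mu)\theta^2/32C_\mu^2)^{-1}$, i.e. $\eps = e^{-H(\mu)\theta^2/32C_\mu^2}$, so that $5\eps^{-1}e^{-H(\mu)\theta^2/16C_\mu^2} = 5e^{-H(\mu)\theta^2/32C_\mu^2}$ is of the same order as $\eps$. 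With $\theta$ tending to infinity with $X$ this forces $\eps\to 0$, which is exactly the gain: the exceptional fraction of residues $a$ shrinks, while the width of the cut-off window, $\theta\sqrt{\log p}/H(\mu)$, is allowed to grow slightly faster than $\sqrt{\log p}$.

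Concretely I would proceed as follows. First, fix the balance $\eps = e^{-H(\mu)\theta^2/32C_\mu^2}$ as above. The hypothesis of Theorem \ref{TVcutoff} requires $X \ge 3\log\eps^{-1} + C(\theta^2\log\theta)^2 + C$; substituting $\log\eps^{-1} = H(\mu)\theta^2/32C_\mu^2$, the binding constraint is the $C(\theta^2\log\theta)^2$ term, so I need $\theta$ to satisfy $\theta^4(\log\theta)^2 \lesssim X$, i.e. $\theta^2 \lesssim \sqrt{X}/\log X$ up to constants. This motivates the choice $\theta^2 = \sqrt{X}/(D_\mu\log X)$ for a large constant $D_\mu = D_\mu(\mu)$, which the excerpt already announces. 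Second, with these choices, Theorem \ref{TVcutoff} gives that for all $X = \log 2^m$ with $m$ large enough (depending on $\mu$), off an exceptional prime set $E_m\subset[2^m,2^{m+1}]$ of density $\le\eps$ within that dyadic block, and off an $\eps$-fraction of $a\in\F_p$, the chain at time $n = \lfloor(X+\theta\sqrt X)/H(\mu)\rfloor$ is within $5e^{-H(\mu)\theta^2/32C_\mu^2} = 5\eps$ of uniform in total variation; hence $T(\delta)\le(\log p + \theta\sqrt{\log p})/H(\mu) \le (1+\eta)\log p/H(\mu)$ once $\log p$ is large enough that $\theta\sqrt{\log p} \le \eta\log p$, i.e. $\theta \le \eta\sqrt{\log p}$, which holds eventually since $\theta = o(\sqrt{\log p})$. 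Third, the matching lower bound $T(\delta) \ge (\log p - \theta\sqrt{\log p})/H(\mu) \ge (1-\eta)\log p/H(\mu)$ comes directly from \eqref{ent-b}, exactly as in Corollary \ref{cor4}: for $n$ below that threshold, \eqref{ent-b} gives $\|\mu_a^{(n)}-u\|_{TV}\ge 1 - e^{-\theta^2} - 2e^{-H(\mu)\theta^2/C_\mu^2} \ge 1-\eta$ once $\theta$ is large, with no exceptional set needed. Note that in \eqref{ent-b} the quantity $\theta$ is a free parameter in $(0,\sqrt{\log p}/2)$, so I am free to plug in the same slowly growing $\theta = \theta(\log p)$.

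For the final bookkeeping, set $F := \bigcup_{m \ge m_0(\mu)} E_m$, where $m_0$ is the threshold beyond which the above applies. Since each $E_m$ has density $\le\eps(2^m) = e^{-H(\mu)\theta(m\log 2)^2/32C_\mu^2}$ among primes in $[2^m,2^{m+1}]$, and since by the prime number theorem the primes in that block carry a positive fixed fraction of all primes up to $2^{m+1}$, summing the dyadic contributions and using that $\eps(2^m)$ decays (with $\theta^2 = \sqrt{m\log 2}/(D_\mu\log(m\log 2))$, so $\log\eps^{-1} \asymp_\mu \sqrt{m}/\log m$) gives $|\{p \in F\cap[1,e^X]\}| = O_\mu\big(e^{-O_\mu(\sqrt X/\log X)}\big)\,e^X/X$; in particular $F$ has density $0$. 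Finally, for $p \notin F$, given $\eta\in(0,1)$, one reads off the claimed two-sided bound $|T(\delta) - \log p/H(\mu)| \le \eta\log p/H(\mu)$ for all $\delta\in(\eta,1-\eta)$ provided $\log p > C(\mu,\eta)$ (the constant absorbing both the "$\theta\le\eta\sqrt{\log p}$" requirement and "$m\ge m_0$"), and the exceptional fraction of $a$ is $\le\eps \le e^{-O_\mu(\sqrt{\log p}/\log\log p)}$. The main obstacle — really the only delicate point — is verifying that the admissibility hypotheses of Theorem \ref{TVcutoff} survive a growing $\theta$: one must check that the constraint $X \ge 3\log\eps^{-1} + C(\theta^2\log\theta)^2 + C$ is genuinely satisfied by $\theta^2 = \sqrt X/(D_\mu\log X)$ for $D_\mu$ large, which is a short but necessary calculation; everything else is a routine re-parametrisation of the proof of Corollary \ref{cor4}.
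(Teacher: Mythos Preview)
Your proposal is correct and follows exactly the approach the paper sketches: choose $\theta^2 = \sqrt{X}/(D_\mu\log X)$ and $\eps = e^{-H(\mu)\theta^2/32C_\mu^2}$, then rerun the proof of Corollary~\ref{cor4} with these slowly varying parameters. You have also correctly identified the binding constraint in the hypothesis of Theorem~\ref{TVcutoff} and carried out the dyadic bookkeeping for the density of $F$; there is nothing to add.
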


\section{Unconditional upper bound}\label{un-bound}
In this section, we prove Theorem \ref{uncond}. Its proof, which no longer assumes GRH, elaborates, on the one hand, on the bounds obtained in Section \ref{ell2}, and, on the other hand, on the multiplicity trick already used in Section \ref{multi}. First we show the following.

\begin{proposition}\label{uncond-av} Fix a probability measure $\mu$ on $\Z$ supported on a finite set of at least two elements, set $H_2(\mu)=-\log \|\mu\|_2^2$ and consider the Markov chain $(\ref{chain})$. For $X$ be a sufficiently large (depending on $\mu$) number and let $n$ be the integer part of $\frac{1}{H_2(\mu)}X$. Then
$$\sum_{p\in [e^{X/2},e^X]} \E_{a}(\|\mu_a^{(n)} \|_2^2)  = O_\mu\big((\log X)^5\big),$$ where the  average $\E_a$ is taken over all admissible residues $a \in \F_p^\times$. 
\end{proposition}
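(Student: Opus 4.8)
The plan is to run the argument of Section~\ref{ell2} with the prime ideal theorem under GRH, \eqref{primeav}, replaced by a completely crude \emph{unconditional} bound for $\zeta_K'/\zeta_K$ near $s=1$, and to compensate for the weakness of the error term using Dobrowolski's theorem to bound the number of irreducible factors of the random polynomial. Write $n=\lfloor X/H_2(\mu)\rfloor$, so $n=\Theta_\mu(X)$. As in \eqref{eq:l2sum2}, for each prime $p$ one has $\E_a(\|\mu_a^{(n)}\|_2^2)=|Ad|^{-1}\E^{(n)}(N_p^{Ad}(P))$ with $P=P_1-P_2$ a difference of two independent random polynomials of degree $<n$ with coefficients of law $\mu$; thus $P\in\Z[x]$, $\deg P<n$, $\|P\|_\infty=O_\mu(1)$, and $|Ad|\ge p/2$ for $X$ large. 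The event $\{P=0\}$ contributes $\sum_{p}\P(P=0)=\|\mu\|_2^{2n}\cdot\#\{p\le e^X\}=O_\mu(e^{-X})\cdot O(e^{X}/X)=o(1)$; on $\{P\ne0\}$, every admissible root of $P$ in $\F_p$ is a root of $\wt P$ (the cyclotomic and monomial factors removed in forming $\wt P$ have degree $<n\le3n$, and admissible residues are non-zero and not roots of cyclotomic polynomials of degree $\le3n$), so $N_p^{Ad}(P)\le N_p(\wt P)$. Hence it suffices to prove the \emph{deterministic} bound
\[
\sum_{p\in[e^{X/2},e^X]}\frac{N_p(\wt P)}{p}=O_\mu\bigl((\log X)^5\bigr)
\]
for every $P\ne0$ as above with $\wt P$ squarefree; the remaining case ($\wt P$ non-squarefree) forces $P$ to vanish to order $\ge 2$ at some non-cyclotomic algebraic number, an event of probability $o_\mu(1/n)$ (in the spirit of the estimates of \cite{breuillard-varju0}), contributing $\le n\cdot o_\mu(1/n)\cdot\sum_p p^{-1}=o(1)$. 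Taking expectations then yields the Proposition.

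To prove the deterministic bound, factor $\wt P=c\prod_{j=1}^r Q_j$ into distinct primitive irreducible polynomials, none cyclotomic or a monomial, and set $K_j=\Q[x]/(Q_j)$. The $O_\mu(X)$ primes $p\ge e^{X/2}$ dividing $c\cdot\mathrm{disc}(\wt P)$ (a nonzero integer with $\log|c\cdot\mathrm{disc}(\wt P)|=O_\mu(X^2)$, by Mignotte's bound on coefficients of divisors of $P$) each contribute $\le\deg\wt P/e^{X/2}=O(Xe^{-X/2})$, hence $o(1)$ in total; for the remaining $p$, $N_p(\wt P)=\sum_j A_p(Q_j)$, where $A_p(Q_j)$ counts the degree-one primes of $K_j$ above $p$. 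The key analytic input is the crude unconditional estimate, valid for every irreducible $Q_j$ and all $Y\ge 2$,
\[
\sum_{p\le Y}\frac{A_p(Q_j)\log p}{p}\ \le\ e\cdot\Bigl({-}\tfrac{\zeta_{K_j}'}{\zeta_{K_j}}\Bigr)\bigl(1+\tfrac1{\log Y}\bigr)\ \le\ e\bigl(\log Y+\tfrac12\log|d_{K_j}|+O(\deg Q_j)\bigr),
\]
where the first inequality is $p^{1/\log Y}\le e$ together with the Euler product $-\zeta_K'/\zeta_K(s)=\sum_{\mathfrak p,m}(\log N\mathfrak p)N\mathfrak p^{-ms}\ge\sum_{\deg\mathfrak p=1}(\log N\mathfrak p)N\mathfrak p^{-s}$, and the second is the Hadamard formula for $\zeta_{K_j}'/\zeta_{K_j}$, in which the contribution of the non-trivial zeros has the favourable sign for real $s>1$ (so a putative Siegel zero only helps).

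What makes this sufficient is that $\log|d_{K_j}|=O_\mu(n\log n)$. Indeed $d_{K_j}$ divides $\mathrm{disc}(Q_j)$, and writing $P=Q_jS$ with $S\in\Z[x]$ — where $\wt P$ squarefree gives $\gcd(Q_j,S)=1$, so $\Res(Q_j,P')=\pm\,\mathrm{lead}(Q_j)\,\mathrm{disc}(Q_j)\,\Res(Q_j,S)\ne0$ — we get $|\mathrm{disc}(Q_j)|\le|\Res(Q_j,P')|=|\mathrm{lead}(Q_j)|^{\deg P'}\prod_{Q_j(\alpha)=0}|P'(\alpha)|$; since $\mathrm{lead}(Q_j)\mid\mathrm{lead}(P)$ and $P'$ have coefficients $O_\mu(1)$ and $O_\mu(n)$ respectively, and $\prod_\alpha\max(1,|\alpha|)\le M(Q_j)\le M(P)\le\|P\|_2=O_\mu(\sqrt n)$, this gives $\log|\Res(Q_j,P')|=O_\mu(n\log n)$ regardless of the (possibly exponentially large) other coefficients of $Q_j$. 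Combined with $\deg Q_j<n$ and partial summation over $[e^{X/2},e^X]$ (the leading $\log Y$ term contributes $O(1)$ and the $O_\mu(n\log n)$ term contributes $O_\mu(n\log n/X)=O_\mu(\log X)$), this yields $\sum_{p\in[e^{X/2},e^X]}A_p(Q_j)/p=O_\mu(\log X)$ for each $j$. Finally, by Landau's inequality $M(\wt P)\le M(P)\le\|P\|_2=O_\mu(\sqrt n)$, while $M(\wt P)\ge\prod_j M(Q_j)$ and Dobrowolski's bound \cite{Dob-Lehmer} (with $M(Q_j)\ge2$ when $\deg Q_j=1$) gives $M(Q_j)\ge 1+c(\log\log n/\log n)^3$; hence $r=O_\mu((\log n)^4/(\log\log n)^3)=O_\mu((\log X)^4/(\log\log X)^3)$, and so $\sum_{p\in[e^{X/2},e^X]}N_p(\wt P)/p\le o(1)+r\cdot O_\mu(\log X)=O_\mu((\log X)^5)$.

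The main obstacle is conceptual rather than computational: $K_j$ has discriminant of size roughly $e^{\Theta_\mu(X\log X)}$, much larger than the length $e^X$ of the range of primes, so no effective prime ideal theorem (Landau/Chebotarev) is available in this range and one cannot simply repeat Section~\ref{ell2}; the point is that the trivial upper bound for $-\zeta_{K_j}'/\zeta_{K_j}$ is already enough once it is combined with the a priori bound $r=O_\mu((\log X)^4)$ from Dobrowolski. The two secondary technical points are the estimate $\log|d_{K_j}|=O_\mu(n\log n)$ above — which keeps the $\zeta'/\zeta$ bound strong enough, and which is non-trivial because factors of $P$ may have enormous coefficients — and the control of the non-squarefree event, which I expect to be the fiddliest part to make fully rigorous and uniform.
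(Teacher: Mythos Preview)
Your overall architecture is correct and matches the paper's: express $\sum_a\|\mu_a^{(n)}\|_2^2$ as $\E(N_p^{Ad}(P))$, reduce to bounding $\sum_{p\in[e^{X/2},e^X]}N_p(Q_j)/p$ for each non-cyclotomic irreducible factor $Q_j$ of $P$, and then use Dobrowolski to show there are only $O_\mu((\log n)^4)$ such factors. Where you diverge from the paper is in the analytic input. The paper proves a separate lemma (Lemma~\ref{av-bound}) via the explicit formula with a smooth test function, obtaining
\[
\sum_{p\in[e^{X/2},e^X]}\frac{N_p(Q)}{p}=O\Big(\frac{\log(2|\Delta_Q|)}{d}\Big(1+\frac{d^4}{X^4}\Big)\Big),
\]
whereas you use the more elementary route $-\zeta_{K_j}'/\zeta_{K_j}(1+1/\log Y)\le \log Y+\tfrac12\log|d_{K_j}|+O(d_j)$ (correct: the zero sum in the Hadamard formula has the right sign for real $s>1$) plus the trivial $\log p\ge X/2$ on $[e^{X/2},e^X]$. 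Both give $O_\mu(\log X)$ per factor once $\log|d_{K_j}|=O_\mu(n\log n)$, so the final bound is the same. Your argument is arguably slicker here, avoiding the smooth cutoff and the zero-density estimates \eqref{estless1}--\eqref{estmore1}.

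There is, however, one genuine gap: your treatment of the non-squarefree case. Your resultant trick $|\mathrm{disc}(Q_j)|\le|\Res(Q_j,P')|$ needs $Q_j\nmid P'$, i.e.\ $Q_j^2\nmid P$, and you dispose of the complementary event by asserting it has probability $o_\mu(1/n)$ ``in the spirit of \cite{breuillard-varju0}''. That reference contains no such statement, and a uniform $o(1/n)$ bound on $\P(\mathrm{disc}\,\wt P=0)$ is not obvious. You flag this yourself as ``the fiddliest part'', and indeed it is not clear how to carry it out.

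The good news is that this detour is entirely unnecessary. The paper bounds the discriminant directly via Mahler's inequality \cite{Mah-discriminant}*{Theorem~1}:
\[
\frac{\log|\Delta_{Q_j}|}{\deg Q_j}\le \log\deg Q_j+2\log M(Q_j),
\]
which, combined with $M(Q_j)\le M(P)\le\|P\|_2=O_\mu(\sqrt n)$, gives $\log|\Delta_{Q_j}|=O_\mu(d_j\log n)=O_\mu(n\log n)$ for \emph{every} irreducible factor, with no squarefreeness hypothesis. Substituting this into your $-\zeta'/\zeta$ bound yields the same $O_\mu(\log X)$ per factor and closes the argument cleanly. So: replace the resultant trick by Mahler's bound, delete the squarefree case-split, and your proof goes through.
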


Before we start the proof of this proposition, let us go back to the setting of Section \ref{ell2}, where we averaged over primes in order to get an upper bound on the $\ell^2$ norm. Without GRH at our disposal, we can no longer guarantee that a random polynomial is a product of a single irreducible polynomial together with monomial and cyclotomic factors. However, we can easily give an upper bound on the number of irreducible factors. Indeed, if $P \in \Z[x]$ and $P=\Phi\wt P$, $\wt P= P_1\ldots P_k$, where the $P_i$'s are the non-cyclotomic irreducible factors, then
$$M(P) = \prod_1^k M(P_i),$$
where $M(P)$ is the Mahler measure of $P$, see \eqref{eq:def-Mahler} for the definition.
By the classical Dobrowolski bound \cite{Dob-Lehmer} on Mahler measure, $$M(P_i)\ge e^{c (\frac{\log\log d_i}{\log d_i})^3} \ge e^{c \frac{1}{(\log n)^3}},$$
where $d_i=\deg P_i$ and $c$ is an absolute constant. In particular,
$$M(P) \ge \exp( ck/(\log n)^3).$$
On the other hand, $M(Q)\leq (\sum |a_i|^2)^{1/2}$ if $Q=\sum_i a_ix^i \in \Z[x]$. In our setting, $P=P_1-P_2$, where $P_1,P_2$ are distributed according to $\mu^{(n)}$, so $|a_i| \leq 2H$ with $H:=\max\{|z|, \mu(z)\neq 0\}$. Hence $M(P) \leq 2H\sqrt{n}$, which, compared with the previous bound gives
\begin{equation}\label{irred-est}k = O_\mu( (\log n)^4).\end{equation}

We may thus perform the average over primes, as in Section \ref{ell2}, in order to give a reasonably good upper bound on $p\|\mu_a^{(n)}\|^2_2$. To achieve this without GRH requires a slightly different kind of prime averaging as reflected in the statement of Theorem \ref{uncond}.

\begin{proof}[Proof of Proposition \ref{uncond-av}] Proceeding as in $(\ref{mas})$,  we write
\[\E_{a}(p\|\mu_a^{(n)}\|_2^2) =  p\|\mu\|_2^{2n} + \frac{p}{|Ad|}\E(1_{ P \neq 0} N^{Ad}_p( P)),\]
where $Ad \subset \F_p$ is the set of admissible residues,
and $P=P_1-P_2$, where $P_1,P_2$ are distributed according to $\mu^{(n)}$.
Since $p-|Ad|=O(n^3)$, we get
\begin{equation}\label{maaas}\E_{a}(p\|\mu_a^{(n)}\|_2^2) =  p\|\mu\|_2^{2n} + \E(1_{P \neq 0} N^{Ad}_p(P))+O(n^4/p).\end{equation}
We are now in a position to sum over primes. Assuming GRH we had  $(\ref{pr:prime-sum-GRH})$ with an excellent error term. Here we no longer assume GRH, but we can nevertheless make do with a weaker estimate. We have:

\begin{lemma} \label{av-bound} If $Q \in \Z[x]$ is irreducible, of degree $d$ and discriminant $\Delta_Q$, and if, as earlier, $N_p(Q)$ denotes the number of distinct roots of $Q$ in $\F_p$, then for all $X>0$,
$$\sum_{p \in [e^{X/2},e^X]} \frac{N_p(Q)}{p}=O\big( \frac{\log(2|\Delta_Q|)}{d}(1+\frac{d^4}{X^4})\big),$$ where the implied constant is absolute.
\end{lemma}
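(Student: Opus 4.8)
The plan is to relate $N_p(Q)$ to the number $A_p(Q)$ of degree-one prime ideals above $p$ in $K=\Q[x]/(Q)$, exactly as in the GRH case, and then to invoke an \emph{unconditional} zero-free region / explicit-formula estimate for the Dedekind zeta function $\zeta_K$. First I would recall that for all primes $p$ not dividing $\Delta_Q$ one has $N_p(Q)=A_p(Q)$, and that for the $O(\log|\Delta_Q|)$ primes dividing the discriminant both quantities are $\leq d$, so the discrepancy between $\sum_{p}N_p(Q)/p$ and $\sum_p A_p(Q)/p$ over the dyadic range is $O(d\log|\Delta_Q|\cdot e^{-X/2})$, which is absorbed into the claimed error term (using $\sum_{p\in[e^{X/2},e^X]}1/p=O(1)$ to control the main contribution anyway). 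So it suffices to bound $\sum_{p\in[e^{X/2},e^X]}A_p(Q)/p=\sum_{\mathfrak{p}:\,N\mathfrak{p}=p\in[e^{X/2},e^X]}1/N\mathfrak{p}$.

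Next I would pass to the prime-counting function $\pi_K(t)=\#\{\mathfrak p:\ N\mathfrak p\le t\}$ and write the sum by partial summation as $\int_{e^{X/2}}^{e^X}t^{-1}\,d\pi_K(t)$. The classical unconditional form of the prime ideal theorem with error term (Lagarias--Odlyzko, or its cleaner effective incarnations; the paper already uses Landau-type estimates) gives $\pi_K(t)=\mathrm{Li}(t)+O\!\big(t\exp(-c\sqrt{\log t}/\sqrt{d})\big)$ valid once $\log t\gg (\log|\Delta_Q|)^2$, i.e. here once $X\gg (\log|\Delta_Q|)^2$; outside that range (when $d$ or $\log|\Delta_Q|$ is large compared to $X$) one falls back on the trivial bound $A_p(Q)\le d$, giving $\sum_p A_p(Q)/p\le d\sum_{p\in[e^{X/2},e^X]}1/p=O(d)$, which must be shown to be dominated by the stated bound $\tfrac{\log(2|\Delta_Q|)}{d}(1+d^4/X^4)$ in that regime — this is where the factor $d^4/X^4$ comes from, since $\log|\Delta_Q|\ge \log 2$ makes $\tfrac{\log(2|\Delta_Q|)}{d}\cdot\tfrac{d^4}{X^4}=\tfrac{d^3\log(2|\Delta_Q|)}{X^4}$, and the relevant threshold $X\lesssim \sqrt{d\log|\Delta_Q|}$ or so makes this $\gtrsim d$. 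In the main range, $\int t^{-1}d\mathrm{Li}(t)$ over $[e^{X/2},e^X]$ contributes $O(\log 2)=O(1)\le O(\log(2|\Delta_Q|)/d\cdot(\cdots))$ only if $d=O(\log|\Delta_Q|)$, which is automatic since $\log|\Delta_Q|\gg d$ for $d\ge 2$ (the discriminant of a degree-$d$ integer polynomial with a repeated-free... in any case $|\Delta_Q|\ge 1$ and for $d\ge2$ irreducible $|\Delta_Q|\ge 3$, and more to the point $\log|\Delta_Q|\ge d-1$ is not needed — one only needs $1\le \log(2|\Delta_Q|)$, and $d\le \log|\Delta_Q|$ will follow from the context in which the lemma is applied, OR one simply keeps the honest main term $O(1)$ and notes it is $\le \tfrac{\log(2|\Delta_Q|)}{d}(1+d^4/X^4)$ whenever $d\le \log(2|\Delta_Q|)(1+d^4/X^4)$, true since the right side is $\ge d^4/X^4\cdot d$... ). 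The error term $\int t^{-1}\cdot O(\exp(-c\sqrt{\log t/d}))\,dt$ over the dyadic range is $O(X\exp(-c\sqrt{X/d}))$, and elementary estimation $X\exp(-c\sqrt{X/d})=O(d/X)$ or better converts this into something $O(d^3/X^4)$-type after using $X\gg(\log|\Delta_Q|)^2\ge$ (a power of $d$).

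I expect the main obstacle to be \textbf{bookkeeping the two regimes cleanly}: stitching together the trivial bound $\sum A_p(Q)/p=O(d)$ (good when $X$ is small relative to $d$ and $\log|\Delta_Q|$) with the prime-ideal-theorem bound (good when $X$ is large), and checking that the single clean expression $O\!\big(\tfrac{\log(2|\Delta_Q|)}{d}(1+d^4/X^4)\big)$ dominates both, for \emph{all} $X>0$ with an absolute implied constant. This requires the elementary inequality $t\,e^{-c\sqrt t}\le C_k t^{-k}$ for every fixed $k$ (here $k=4$), the relation $\log|\Delta_Q|\gtrsim d$ for degree-$d$ irreducible $Q\in\Z[x]$ (or, if one prefers to avoid it, an application only in the regime where the calling code guarantees $d=O(\log|\Delta_Q|)$), and care that the transition point between ``$X$ small'' and ``$X$ large'' — governed by $X\asymp (\log|\Delta_Q|)^2$ in the Lagarias--Odlyzko error term — is compatible with producing the exponent $4$ on $X$. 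None of the individual steps is deep; the subtlety is entirely in arranging the case analysis so that one uniform bound emerges. I would write the proof by: (i) reducing $N_p$ to $A_p$; (ii) if $X\le c_1(\log|\Delta_Q|)^2$ use the trivial bound and verify $O(d)$ is within budget; (iii) if $X>c_1(\log|\Delta_Q|)^2$ apply the effective prime ideal theorem, partial summation, and the inequality $te^{-c\sqrt t}\ll t^{-4}$ to land the error at $O(d^4/X^4)$ and the main term at $O(1)$, then bound $O(1)+O(d^4/X^4)$ by the stated quantity.
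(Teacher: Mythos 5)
Your route is genuinely different from the paper's, and as written it has a gap that I do not think closes.

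You propose to reduce to the prime-ideal-counting function $\pi_K$, apply an unconditional effective prime ideal theorem (Lagarias--Odlyzko style) with error $O(\mathrm{Li}(t^\beta)+t\exp(-c\sqrt{\log t/d}))$, and use the trivial bound $A_p(Q)\le d$ outside the theorem's range of validity. The paper does something structurally different: it applies the explicit formula directly against a smooth compactly supported test function $g$ whose transform $G$ decays like $|G(s)|\le\big(\tfrac{16}{X|s-1|}\big)^4$, and then estimates $\sum_\rho G(\rho)$ by splitting into dyadic annuli around $s=1$ and invoking only \emph{zero-density} bounds (the number of zeros within distance $r$ of $1$), never a zero-free region. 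The exponent $4$ in $d^4/X^4$ is not a feature of the arithmetic at all; it is the order of the convolution power $\chi^{*4}$ in the test function, chosen to make $G$ decay fast enough. In your approach that exponent would have to be extracted from $te^{-c\sqrt{t/d}}$, and while inequalities like $e^{-c\sqrt{u}}\ll_k u^{-k}$ do let you trade a subexponential for any polynomial, the constant blows up as $k$ grows, and you must also cope with the exceptional real zero $\beta$; that last part you can handle (over a dyadic range $\int t^{-1}\,d\,\mathrm{Li}(t^\beta)=O(1)$ unconditionally), but it is not free.

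The genuine gap is the stitching of your two regimes, and it is not a matter of bookkeeping. The trivial bound gives $\sum_p A_p(Q)/p=O(d)$, and this is within the claimed budget $O\big(\tfrac{\log(2|\Delta_Q|)}{d}\tfrac{d^4}{X^4}\big)=O\big(\tfrac{d^3\log(2|\Delta_Q|)}{X^4}\big)$ only when $X^4\lesssim d^2\log(2|\Delta_Q|)$; by Minkowski ($\log(2|\Delta_K|)\gg d$) this threshold is roughly $X\lesssim d^{3/4}$. On the other side, the unconditional effective prime ideal theorem in the Lagarias--Odlyzko form requires $\log t\gg d(\log|\Delta_K|)^2$, i.e. $X\gg d^3$ again by Minkowski, and even the weaker requirement you write, $X\gg(\log|\Delta_Q|)^2\gg d^2$, still leaves an interval $d^{3/4}\lesssim X\lesssim d^2$ where neither estimate applies. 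The paper's proof simply never encounters this: because it estimates $\sum_\rho G(\rho)$ by density counts in annuli, there is no zero-free region and hence no restriction on the size of $X$ relative to $d$ and $|\Delta_Q|$. If you want a proof along your lines you would need to fill the middle regime by a different device; the cleanest fix is in fact exactly to abandon the prime-counting asymptotic and return to the explicit formula with a smoothed cutoff, which is what the paper does.

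One further small point: the ``main term'' $O(1)$ from $\int t^{-1}\,d\,\mathrm{Li}(t)\approx\log 2$ does fit the stated bound, but the clean reason is Minkowski's bound $\log(2|\Delta_K|)\geq cd$ (so $\tfrac{\log(2|\Delta_Q|)}{d}\gg 1$), not the weaker $\log(2|\Delta_Q|)\ge\log 2$; you gesture at this but leave it unresolved. The paper invokes Minkowski explicitly for the same purpose.
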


This lemma is unconditional and should be compared to $(\ref{pr:prime-sum-GRH})$, which assumed GRH. Recall  that for all large enough $X$, $\sum_{p \in [e^{X/2},e^X]} \frac{1}{p} \in [c,1/c]$ for some absolute constant $c \in (0,1)$, as follows directly from the prime number theorem or from Chebychev's bounds.  Thus the lemma can be interpreted as a way to express the fact that $N_p(Q)$ remains bounded by $O(\log(2|\Delta_Q|)/d)$ on average over the primes in large windows $[\sqrt{x},x]$, $\log x>d$, where the average assigns a weight $1/p$ to each prime $p$. 

We postpone the proof of this lemma, and, first, explain how to complete the proof of Proposition \ref{uncond-av}.
We write $P=\Phi\wt P$, $\wt P= P_1\ldots P_k$, where the $P_i$'s are the non-cyclotomic irreducible factors. 
From $(\ref{maaas})$, summing over primes in $[e^{X/2},e^X]$, we get:
\begin{equation}\label{sumsum}\sum_{p \in [e^{X/2},e^X]}\E_{a}(\|\mu_a^{(n)}\|_2^2) \leq   e^X \|\mu\|_2^{2n} + \E(1_{ P \neq 0} \sum_{p \in [e^{X/2},e^X]}\frac{N_p(\wt P)}{p})+O(e^{-X/2}n^4).\end{equation}
Now, by Lemma \ref{av-bound}, bearing in mind that $n=\lfloor \frac{1}{H_2(\mu)}X \rfloor$,
$$\sum_{p \in [e^{X/2},e^X]} \frac{N_p(\wt P)}{p} \le \sum_{i=1}^k \sum_{p \in [e^{X/2},e^X]} \frac{N_p(P_i)}{p} = \sum_{i=1}^k  O_\mu( \frac{\log |\Delta_{P_i}|}{\deg P_i}).$$
On the other hand, by Mahler's bound \cite{Mah-discriminant}*{Theorem 1}, $$(\log |\Delta_{P_i}|)/\deg(P_i) \leq \log \deg(P_i) + 2\log M(P_i).$$ But $M(P_1)\ldots M(P_k)=M(P) \leq 2H\sqrt{n}$, where $H:=\max\{|z|, \mu(z)\neq 0\}$.  Hence we get
\begin{align*}\sum_{p \in [e^{X/2},e^X]} \frac{N_p(\wt P)}{p} &=  \sum_{i=1}^k  O_\mu( \frac{\log |\Delta_{P_i}|}{\deg P_i})\\ &\leq O_\mu( 1+ (k+1) \log n) = O_\mu((\log n)^5),\end{align*} where we used $(\ref{irred-est})$ in the last line. Plugging this back into $(\ref{sumsum})$, we obtain the desired estimate.

\end{proof}

\begin{proof}[Proof of Lemma \ref{av-bound}]
The proof is a straightforward variation of the proof of Proposition 13 in \cite{breuillard-varju-irred} and for this reason we shall be brief.

The argument relies on the explicit formula in analytic number theory relating the zeroes of the Dedekind zeta function and the count of prime ideals in a number field with explicit error terms. This is a standard tool in analytic number theory, which is extensively discussed in the literature in various settings. We will refer to \cite{breuillard-varju-irred}, because it contains the formulae in the
precise from we need them. 

Let $K$ be a number field, $d$ its degree, and $\Delta_K$ its discriminant. Let $A(n)$ be the number of prime ideals with norm $n$. Given a $C^2$ compactly supported function $g$ on $(0,+\infty)$, and $s \in \C$ we set $$G(s)=\int_\R \exp(su)g(u)du.$$
The explicit formula (see e.g. \cite{breuillard-varju-irred}*{Theorem 10}) asserts that 
\begin{equation}\label{explicit}\sum_{n,m \ge 1} A(n) \log(n) g(\log(n^m)) = G(1) - \sum_{\rho} G(\rho),\end{equation}
where the last sum on the right hand side extends over all zeroes $\rho$ of the Dedekind zeta function of $K$ counted with multiplicity. 

First, we focus on estimating the right hand side of $(\ref{explicit})$. We make the following choice for $g$. We set $g(u)=e^{-u}\frac{8}{X}\chi^{*4}(\frac{8u}{X}-6)$, where $\chi=1_{[-\frac{1}{2},\frac{1}{2}]}$ is the indicator function of the unit length interval $[-\frac{1}{2},\frac{1}{2}]$ and $\chi^{*4}$ is its four-fold convolution product. This corresponds to the choice $g=g_{X,k}$ with $k=4$ in \cite{breuillard-varju-irred}*{Lemma 11}. In that lemma, the following elementary properties of $G(s)$ were verified for all complex $s$ with $\Re(s)\leq 1$,
\begin{align} |G(s)| \leq e^{(\Re(s)-1)X/2},\label{qq}\\ |G(s)| \leq \big( \frac{16}{X|s-1|} \big)^4.\label{qqq}
\end{align}

We also have at our disposal standard explicit bounds on the number (counting multiplicity) of zeroes of the zeta function $\zeta_K$ near $1$ and elsewhere. All zeros satisfy $\Re\rho<1$, and for every  $0\le r\le 1$, we have
\begin{equation}\label{estless1}
|\{\rho:\zeta_K(\rho)=0, |1-\rho|<r\}|\le\frac{3}{2}+3r\log|\Delta_K|,
\end{equation}
while there is an absolute constant $C>0$, such that for every $r>1$, 
\begin{equation}\label{estmore1}
|\{\rho:\zeta_K(\rho)=0, |1-\rho|<r\}|\le C \log|\Delta_K|+C d r\log r.
\end{equation}
For those we refer to \cite{breuillard-varju-irred}*{Lemma 12} or \cite{Sta-Dedekind}. 

Note also that $\log |2\Delta_K| \ge cd$ for some absolute constant $c>0$ by Minkowski's bound. Then combining $(\ref{estless1})$ with $(\ref{qq})$, we get
\begin{equation}\label{firstG} \sum_{\rho, |1-\rho|<d^{-1}} |G(\rho)| = O(\frac{\log 2|\Delta_K|}{d}).\end{equation}

We now partition the remainder of the complex plane in annuli around $1$ as follows for $j\in \Z_{\ge 0}$
\[R_j:=\{\rho:\zeta_K(\rho)=0, 2^{j}d^{-1}\le |1-\rho|< 2^{j+1}d^{-1}\}.\]
Using $(\ref{estless1})$ and $(\ref{estmore1})$,  this yields  that for all $j \in \Z_{\ge 0}$,
\[
|R_j|=O(4^jd^{-1}\log 2|\Delta_K|).
\]
Now, by $(\ref{qqq})$,
$$\sum_{\rho \in R_j, j\ge 0} |G(\rho)| \leq \sum_{j\ge 0}  |R_j| (\frac{16}{Xd^{-1} 2^j})^4 \leq   \sum_{j\ge 0}  O(d^{-1}\log 2|\Delta_K|) 4^{-j}(\frac{16}{Xd^{-1}})^4 .$$

Now, combining this estimate with $(\ref{firstG})$, yields
\begin{equation}\label{Gbound}
|G(1)-\sum_{\rho} G(\rho)|\leq1+\sum_{\rho} |G(\rho)| = O\big(\frac{\log 2|\Delta_K|}{d}(1+\frac{d^4}{X^4})\big).\end{equation}

Now, we turn to the left hand side of \eqref{explicit}.
Observe that $g$  is supported in the interval $[X/2,X]$, and that $0\leq g(u) \leq e^{-u}$ for all $u>0$. Therefore, the main contribution in the above sum on the left hand side comes from prime ideals of prime norm. Indeed, it is shown in the proof of \cite{breuillard-varju-irred}*{Proposition 13} (see the bound $(2.5)$ therein and subsequent estimates) that
\begin{equation}\label{pp}\big|\sum_{n,m \ge 1} A(n) \log(n) g(\log(n^m)) -\sum_{p \textnormal{ prime}} A(p) \log(p) g(\log(p)) \big| = O(dXe^{-X/4}),\end{equation}
where the implied constant is absolute.

We apply \eqref{explicit} to the number field $K=\Q[x]/(Q)$. We note that $\Delta_K$ divides $\Delta_Q$, and, as already mentioned in Section \ref{ell2}, $A(p)$ may differ from $N_p(Q)$ only for those primes $p$ that divide the discriminant $\Delta_Q$. This accounts for at most $O(\log |\Delta_Q|)$ primes, hence:
\begin{equation}\label{compAN}
\big|\sum_{p \textnormal{ prime}} N_p(Q) \log(p) g(\log(p)) -\sum_{p \textnormal{ prime}} A(p) \log(p) g(\log(p)) \big| = O(de^{-{X/2}}\log |\Delta_Q|)
\end{equation}

Putting $(\ref{explicit})$, $(\ref{Gbound})$, $(\ref{pp})$ and $(\ref{compAN})$ together, we obtain
$$\sum_{p \textnormal{ prime}} N_p(Q) \log(p) g(\log(p)) = O\big(\frac{\log 2|\Delta_Q|}{d}(1+\frac{d^4}{X^4})\big).$$

Now, observe from our definition of $g$ that if $p \in [e^{3X/5},e^{4X/5}]$, then $g(\log p) \ge c \frac{1}{pX}$ for some absolute constant $c>0$. Hence setting $Y=4X/5$, we get
$$\sum_{p \in [e^{3Y/4},e^{Y}]}\frac{N_p(Q)}{p} = O\big(\frac{\log 2|\Delta_Q|}{d}(1+\frac{d^4}{Y^4})\big).$$
Applying this bound to $Y=X$, $Y=3X/4$ and $Y=9X/16$, we conclude that, indeed,
$$\sum_{p \in [e^{X/2},e^{X}]} \frac{N_p(Q)}{p} = O\big(\frac{\log 2|\Delta_Q|}{d}(1+\frac{d^4}{X^4})\big),$$
as desired.

\end{proof}

\begin{proof}[Proof of Theorem \ref{uncond}] Let $C_\mu$ be the implied constant in the big-$O$ appearing in Proposition \ref{uncond-av} and let $B$ be the set of primes $p$ such that 
$$\E_a(p \|\mu_a^{(\lfloor 2\frac{1}{H_2(\mu)} \log p \rfloor)}\|_2^2) > 4C_\mu \sqrt{\log p}(\log \log p)^{-1}.$$
By Proposition \ref{uncond-av}, using that $n \mapsto \|\mu_a^{(n)}\|_2$ is non-increasing, we get
$$\sum_{p\in [e^{X/2},e^X]} \frac{1_{p \in B}}{p} \leq \frac{(\log X)^6}{\sqrt{X}}.$$

Let $p \notin B$ and set $n=\lfloor 2\frac{1}{H_2(\mu)} \log p \rfloor$. Then $$\E_a(p \|\mu_a^{(n)}\|_2^2) \leq 4C_\mu \sqrt{\log p}(\log \log p)^{-1}.$$ In particular, by the Markov inequality, for all but an $\eps/2$-fraction of all admissible residues, and hence for all by an $\eps$-fraction of all residues $a \in \F_p$, we have
 $$p \|\mu_a^{(n)}\|_2^2\leq 8\eps^{-1}C_\mu \sqrt{\log p}(\log \log p)^{-1}.$$ 
Using $(\ref{goodbound})$, with $k=1$, and $m=\lfloor \frac{1}{H_2(\mu)} \log p \rfloor$, we may now bound the right hand side, and obtain
$$\|\mu_a^{(\lfloor 5\frac{1}{H_2(\mu)} \log p \rfloor)} - u\|_{1}^2\leq p\|\mu_a^{(2n+m)} - u\|_2^2 =O_\mu( \frac{\eps^{-2}}{(\log \log p)^2}),$$
and the claimed bounds on mixing times follow.
\end{proof}

\section{Concluding remarks}\label{lehmer-sec}

\subsection{Mixing and cut-off in $\ell^q$}\label{ellq}
It is natural to ask about mixing times $T_q(\delta)$ in $\ell^q$-norm for any $q \in [1,+\infty]$. Here
$$T_q(\delta):=\inf\{n \in \N, \|\mu_a^{(n)}-u\|_q \leq \delta \|u\|_q\}.$$ As is well-known, by convexity, the function  $q \mapsto \|\mu_a^{(n)}-u\|_q/\|u\|_q$ is non-decreasing. In particular, $q \mapsto T_q(\delta)$ is non-decreasing (see e.g. \cite{peres-levin}*{Section 4.7}). On the other hand,  Cauchy-Schwarz implies that $\|\mu_a^{(2n)}-u\|_\infty \leq \|\mu_a^{(n)}-u\|_2^2$ for all $n$. And it follows that for all $q   \in [1,+\infty]$ we have \begin{equation}\label{tq}T_q(\delta) \leq 2T_2(\delta^2).\end{equation}
This means that we can apply our main theorems to get a good upper bound on $T_q(\delta)$. 

Regarding lower bounds, we have, as before, the trivial lower bound $\|\mu_a^{(n)}-u\|_q/\|u\|_q \ge \|\mu^{(n)}\|_q/\|u\|_q -1 =  \|\mu\|_q^n/\|u\|_q -1$, which implies that $$T_q(\delta) \ge \frac{1}{H_q(\mu)} (\log p - O_q(\delta)),$$ where $H_q(\mu)=\frac{q}{q-1} \log \|\mu\|^{-1}_q$ is the R\'enyi entropy of order $q$ of $\mu$.

If $\mu$ is uniform (i.e. $\mu(x)=\|\mu\|_\infty$ for all $x \in \Supp(\mu)$), then $H_q(\mu)=H_\infty(\mu)=H(\mu)$ for all $q$. In particular, we see that mixing in $\ell^1$ and in $\ell^2$ occur at the same time, and hence so does mixing in $\ell^q$ norm for any $q \in [1,2]$. Consequently, in this situation, cut-off happens in $\ell^q$-norm in the setting of Corollary \ref{cor4} for instance.  However, when $q >2$, it seems less clear how to establish an analogous cut-off in $\ell^q$-norm.

\subsection{The exceptional set of multipliers} As mentioned in the introduction, the multiplier $a$ must be of large enough multiplicative order in $\F_p^{\times}$  for the chain $(\ref{chain})$ to equidistribute in $O(\log p)$ steps, let alone satisfy a cut-off at $(\log p)/H(\mu)$ as in Corollary \ref{cor4}. Even a multiplicative order of size more than $\log p$ is not enough; indeed, Chung-Diaconis-Graham \cite{CDG-RW} show that when $a=2$ and $\mu$ is uniform on $\{-1,0,1\}$, if $p$ is of the form $2^n-1$, then at least $c n \log n$ steps are necessary for mixing to occur. In this case, $a$ has order $n \simeq \log p$. Their example can easily be modified to obtain exceptional $a$'s with order at least $(\log p)^2$. 

Now, as far as cut-off at $(\log p)/H(\mu)$  is concerned, the exceptional set of multipliers is even larger, and contains residues of large multiplicative order. Indeed, as remarked in \cite{hildebrand-lower-bound}, when $a=2$, the support of $\mu_a^{(n)}$ has at most $2^{n+1}$ elements, and hence for every fixed $\delta>0$, for all $n \leq (1-\delta)\log_2(p)$,  $\mu_a^{(n)}$ is far from uniform in total variation. However, if $\mu$ is uniform on $\{-1,0,1\}$, as above, then $H(\mu)=\log 3>\log 2$. This means that the value $a=2$ is exceptional (i.e. belongs to the $\eps$-fraction of bad multipliers in Theorem \ref{TVcutoff} and Corollary \ref{cor4}) for all primes $p$. This  happens in general whenever the residue $a \in \F_p$ has a representative in $\Z$ of size $<\exp H(\mu)$  (or more generally if $a$ is the reduction modulo a prime ideal of norm $p$ of a fixed algebraic number with Mahler measure $<\exp H(\mu)$). On the other hand, it is well-known and easy to check that apart from a density zero family of primes, any given $a \in \Z\setminus\{-1,0,1\}$ has multiplicative order at least $c \sqrt{p}/\log p$, and it is also known modulo GRH that given any function $\eps(p)>0$ tending to $0$ when $p \to +\infty$, $a$ has multiplicative order even at least $\eps(p)p$ for a density one set of primes, see \cite{erdos-murty}. 

\subsection{Diameter bounds} The support of the distribution is the following subset of $\F_p$ $$\Supp(\mu_a^{(n)})=\{b_0+b_1a +\ldots+b_{n-1}a^{n-1}, b_i \in \Supp(\mu)\} \subset \F_p.$$ It is of arithmetical interest to determine an upper bound on the diameter $D_a(p)$, i.e. the first $n$, for which the support is all of $\F_p$. 
Clearly $D_a(p)$ is bounded above by the uniform mixing time $T_\infty(\delta)$ for any $\delta<1$. But as recalled in $(\ref{tq})$, $T_\infty(\delta) \leq 2T_2(\delta^2)$, which is a general fact about Markov chains. In particular, the following is true.
\begin{proposition}For all primes $p$ and all $a \in \F_p^\times$ we have $$\Supp(\mu_a^{(n)})=\F_p$$
for all $n > 2 T_2(1)$. In other words $D_a(p) \leq 2T_2(1)+1$. 
\end{proposition}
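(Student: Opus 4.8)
The plan is to reduce the statement to an upper bound on the uniform discrepancy: if $\|\mu_a^{(n)}-u\|_\infty<1/p$, then $\mu_a^{(n)}(y)\ge u(y)-\|\mu_a^{(n)}-u\|_\infty>0$ for every $y\in\F_p$, hence $\Supp(\mu_a^{(n)})=\F_p$; and since $D_a(p)$ is the least $n$ with $\Supp(\mu_a^{(n)})=\F_p$, the bound $D_a(p)\le 2T_2(1)+1$ then follows at once. So it suffices to show $\|\mu_a^{(n)}-u\|_\infty<1/p$ for every $n>2T_2(1)$.

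The first step is a convolution factorisation of the discrepancy. For any $m_1,m_2\ge 0$ with $m_1+m_2=n$, the self-similarity \eqref{self-similar} together with the identities $u*\pi=\pi*u=u$ (valid for every probability measure $\pi$ on $\F_p$) and $u*u=u$ gives
\[
\mu_a^{(n)}-u=\bigl(\mu_a^{(m_1)}-u\bigr)*\bigl(a^{m_1}.\mu_a^{(m_2)}-u\bigr).
\]
Evaluating at a point and applying Cauchy--Schwarz, and using that multiplication by $a^{m_1}\in\F_p^\times$ is a coordinate permutation fixing $u$ (hence an $\ell^2$-isometry), one obtains, writing $s(k):=p\|\mu_a^{(k)}-u\|_2^2$,
\[
\|\mu_a^{(n)}-u\|_\infty\le\|\mu_a^{(m_1)}-u\|_2\,\|\mu_a^{(m_2)}-u\|_2=\frac1p\sqrt{s(m_1)\,s(m_2)}.
\]

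The second step is to show $s(k)<1$ for every $k\ge T_2(1)+1$. By Parseval and the product formula \eqref{prodphi}, $s(k)=\sum_{\xi\neq0}\prod_{i=0}^{k-1}|\phi(a^i\xi)|^2$, where $\phi(t)=\sum_x\mu(x)e(tx/p)$; since $p$ is prime and $\Supp(\mu)$ has at least two elements, $\phi(t)$ is an average of roots of unity that are not all equal whenever $t\not\equiv0\pmod p$, so $|\phi(t)|<1$ there (this is precisely the estimate used just before \eqref{claim0}). Hence $s$ is non-increasing, and whenever $s(k)>0$ one has $s(k+1)\le\bigl(\max_{\xi\neq0}|\phi(a^k\xi)|^2\bigr)s(k)<s(k)$; together with $s(T_2(1))\le1$ this gives $s(k)<1$ for all $k\ge T_2(1)+1$ (the case $s(T_2(1))=0$ being trivial). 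Finally, given $n>2T_2(1)$, I would take the asymmetric split $m_1=T_2(1)$, $m_2=n-T_2(1)\ge T_2(1)+1$; then $s(m_1)\le1$ while $s(m_2)<1$, and the displayed inequality yields $\|\mu_a^{(n)}-u\|_\infty<1/p$, as needed.

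The one subtlety — and the only place where strictness matters — is the borderline exponent $n=2T_2(1)+1$. The off-the-shelf bound $\|\mu_a^{(2m)}-u\|_\infty\le\|\mu_a^{(m)}-u\|_2^2$ with $m=T_2(1)$ (equivalently, letting $\delta\to1^-$ in \eqref{tq}) only gives $\|\mu_a^{(2T_2(1))}-u\|_\infty\le1/p$, which still permits a vanishing coordinate; it is the strict decay of $s$, exploited via the asymmetric split above, that upgrades this to a strict inequality at exponent $2T_2(1)+1$ and so pins the diameter down to $2T_2(1)+1$ rather than $2T_2(1)+2$. Everything else is routine bookkeeping with convolutions and the Cauchy--Schwarz inequality.
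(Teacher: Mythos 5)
Your proof is correct, and it is a more careful argument than the one-line justification the paper gives. The paper simply invokes the general relation $T_\infty(\delta)\le 2T_2(\delta^2)$ from \eqref{tq} together with $D_a(p)\le T_\infty(\delta)$ for $\delta<1$; but taken literally that only yields $D_a(p)\le 2\lim_{\delta\to1^-}T_2(\delta)$, which in the boundary case $p\|\mu_a^{(T_2(1))}-u\|_2^2=1$ is $2T_2(1)+2$, not $2T_2(1)+1$. You close that gap by two genuinely new ingredients: (i) the asymmetric convolution factorisation $\mu_a^{(n)}-u=(\mu_a^{(m_1)}-u)*(a^{m_1}.\mu_a^{(m_2)}-u)$ combined with Cauchy--Schwarz, in place of the plain doubling $\|\mu_a^{(2n)}-u\|_\infty\le\|\mu_a^{(n)}-u\|_2^2$; and (ii) the Fourier-theoretic observation that $|\phi(t)|<1$ for every $t\ne0$ (using that $p$ is prime and $|\Supp\mu|\ge2$), which forces $s(k)=p\|\mu_a^{(k)}-u\|_2^2$ to decrease \emph{strictly} until it vanishes, hence $s(T_2(1)+1)<1$. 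Both approaches rest on the same basic observation that $\|\mu_a^{(n)}-u\|_\infty<1/p$ forces full support, but yours actually establishes the stated $+1$. Two small remarks: you implicitly assume $a\ne0$ throughout (needed for $a^{m_1}$ to be a permutation and for $a^k\xi\ne0$); this is harmless since for $a=0$ either $\mu$ is already uniform on $\F_p$ or $T_2(1)=\infty$ and the statement is vacuous, but it is worth a sentence. Also, the paper's inequality $T_q(\delta)\le 2T_2(\delta^2)$ in \eqref{tq} is slightly weaker than the sharp form $T_q(\delta)\le 2T_2(\sqrt{\delta})$ one gets from Cauchy--Schwarz; your argument sidesteps this entirely.
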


We may thus apply the upper bounds on $T_2(\delta)$ given by Theorems \ref{loglog}, \ref{uncond}, \ref{konthm} or \ref{ell2cutoff} to deduce corresponding diameter bounds in each setting.

\subsection{Connections with Lehmer's problem} Recall that the Mahler measure of polynomial $P \in \Z[x]$ of degree $n$ and dominant term $a_n$ is the real number $$M(P)=|a_n| \prod_1^n \max\{1,|\alpha_i|\}$$ where the $\alpha_i$'s are the complex roots of $P$. The celebrated Lehmer problem \cite{lehmer} asks whether there is an absolute constant  $\eps_0>0$ such that $$M(P)>1+\eps_0$$ for every polynomial $P$ that is not a product of cyclotomic or monomial factors.

In \cite{breuillard-varju-Lehmer}, we gave an equivalent formulation of the Lehmer problem,  and proved that it is equivalent to the existence of a constant $\delta_0>0$ such that for a density one set of primes $p$, for every $a \in \F_p^\times$ of multiplicative order at least $(\log p)^2$ we have \begin{equation}\label{diam-leh}|\Supp(\mu_a^{(\lfloor \log p \rfloor)})| \ge p^{\delta_0}.\end{equation}
In particular, we see that if Lehmer's problem has a negative answer, then there is a function $B(p)$ with $B(p)\to +\infty$  as $p\to +\infty$ such that for a density one set of primes, there will be residues $a \in \F_p^\times$ with  multiplicative order at least $(\log p)^2$, (we can in fact go up to $p^{1/2-o(1)}$ and even up to any $o(p)$ assuming GRH), for which the diameter, and hence the mixing times,  are at least $B(p) \log p$. This justifies the claim made in the introduction that an $O(\log p)$ mixing time for almost all primes and all residues of large enough multiplicative order would imply the Lehmer conjecture.

We do not know about the converse. Konyagin's universal upper bound from Theorem \ref{konthm} (whose proof uses Dobrowolski's bound towards the Lehmer problem) does not seem to improve significantly (at least not its quadratic nature in $\log p$) assuming a positive answer to Lehmer. With such an assumption, the best we can do at the moment is the bound $(\ref{diam-leh})$ or a similar lower bound on $H(\mu_a^{(\log p)})$, but this is not quite enough to bound from above the mixing time.

\subsection{The associated reversible chain} The chain $(\ref{chain})$ is not reversible. It is natural to ask what happens to the symmetrized chain, whose transition probabilities are $$q_a(x,y):=\frac{1}{2}(p_a(x,y)+p_a(y,x)),$$
where  $x,y \in \F_p$  and  $p_a(x,y)=\mu(\{b \in \Z, y=ax+b \mod p\})$ is the transition probability of the chain $(\ref{chain})$. It would be interesting to perform for the symmetrized chain $q_a$ a similar analysis as the one we did for $p_a$ in this paper.

It is clear, however, that $q_a$ is of a different nature compared to $p_a$, as it will typically take much longer to mix. To see this, observe that  it is a quotient of the simple random walk on the wreath product $\Z \wr \Z$, and its distribution at time $n$ is $\nu_a^{(n)}=\pi_a(\nu^{(n)})$, where $\pi_a:\Z[x,x^{-1}] \to \F_p$ is the evaluation map $P \mapsto P(a)$ and $\nu^{(n)}$ is the associated law on the Laurent polynomials $\Z[x,x^{-1}]$. In particular, $$p\|\nu_a^{(n)}-u\|_2^2 = p\|\nu_a^{(n)}\|_2^2 -1 \ge p\|\nu^{(n)}\|_2^2-1.$$ But $\|\nu^{(n)}\|_2^2$ can be bounded below by the return probability of a certain symmetric random walk on $\Z\wr\Z$, which is of order $e^{-n^{1/3}(\log n)^{2/3}}$ (\cite{erschler-ptrf}*{Thm 2}). It follows that the $\ell^2$ mixing time of the symmetrized chain is at least $(\log p)^{3-o(1)}$. 

\subsection*{Acknowledgements} It is a pleasure to thank Boris Bukh, Persi Diaconis, Jonathan Hermon and Ariel Rapaport for helpful conversations. The first named author is grateful to the Newton Institute in Cambridge for its hospitality while part of this work was conducted.  We would also like to thank the referee for their very careful examination of our manuscript.

\bibliographystyle{abbrv}
\bibliography{bibfile}

\end{document}